\newtheorem{theorem}{Theorem}
\newtheorem*{thm}{Theorem}
\newtheorem{corollary}{Corollary}
\newtheorem*{proposition}{Proposition}
\newtheorem*{lemma}{Lemma}
\theoremstyle{definition}
\theoremstyle{remark}
\begin{document}

\title[]{Discrete Rearrangements and the\\ P\'olya-Szeg\H{o} Inequality on Graphs}

\author[]{Stefan Steinerberger}
\address{Department of Mathematics, University of Washington, Seattle, WA 98195, USA} \email{steinerb@uw.edu}

\keywords{Rearrangement, P\'olya-Szeg\H{o} inequality, graphs, edge-isoperimetry, vertex-isoperimetry, reordering, relabeling, partial differential equations on graphs.}
\subjclass[2010]{05C78, 28A75, 46E30} 
\thanks{S.S. is supported by the NSF (DMS-2123224) and the Alfred P. Sloan Foundation.}

\begin{abstract} For any $f: \mathbb{R}^n \rightarrow \mathbb{R}_{\geq 0}$ the symmetric decreasing rearrangement $f^*$ satisfies the P\'olya-Szeg\H{o} inequality $\| \nabla f^*\|_{L^p} \leq \| \nabla f\|_{L^p}$. The goal of this paper is to establish analogous results in the discrete setting for graphs satisfying suitable conditions. We prove that if the edge-isoperimetric problem on a graph has a sequence of nested minimizers, then this sequence gives rise to a rearrangement satisfying the P\'olya-Szeg\H{o} inequality in $L^1$. This shows, for example, that a specific rearrangement on the grid graph $\mathbb{Z}^2$, going around the origin in a spiral-like manner, satisfies $\| \nabla f^*\|_{L^1} \leq \| \nabla f\|_{L^1}$.
The $L^{\infty}-$case is implied by an optimal ordering condition in vertex-isoperimetry.  We use these ideas to prove
that the canonical rearrangement on the infinite $d-$regular tree satisfies the P\'olya-Szeg\H{o} inequality for all $1 \leq p \leq \infty$.
\end{abstract}

\maketitle

\vspace{0pt}
    
    \section{Introduction}
  \subsection{Rearragements.} Rearrangement principles are a cornerstone of analysis. If $f: \mathbb{R}^n \rightarrow \mathbb{R}_{\geq 0}$
  is a nonnegative function, then its symmetric decreasing rearrangement $f^*:\mathbb{R}^n \rightarrow \mathbb{R}_{\geq 0}$ is defined by asking that
  \begin{enumerate}
  \item the super-level sets $\left\{ x \in \mathbb{R}^n: f^*(x) \geq  s\right\}$ are balls centered at the origin 
  \item which have the same measure as the original super-level set
  $$ \left|\left\{ x \in \mathbb{R}^n: f(x) \geq  s\right\}\right| = \left|\left\{ x \in \mathbb{R}^n: f^*(x) \geq  s\right\}\right|.$$
  \end{enumerate}
If one looks at the domains where $f$ and $f^*$ assume values in a certain interval $[a,b]$, then these domains
  have the exact same volume. This implies that $ \| f\|_{L^p} = \|f^*\|_{L^p}$ for all $p>0$ (sometimes known as `layer cake formula' or `bathtub principle').
The celebrated P\'olya-Szeg\H{o} inequality implies that rearrangement is `smoothing' in the sense of decreasing the size of the gradient and, for all $1 \leq p \leq \infty$,
  $$ \| \nabla f^* \|_{L^p}  \leq  \| \nabla f \|_{L^p}.$$
 This property has many applications in analysis, partial differential equations and mathematical physics, we refer to the excellent books by Baernstein \cite{baernstein}, Lieb-Loss \cite{lieb} and P\'olya-Szeg\H{o} \cite{Polya}. 
 One particularly important application concerns certain partial differential equations. Consider, for example, the equation $-\Delta u = u^p$ on $\mathbb{R}^n$. It arises naturally as the Euler-Lagrange equation of the energy functional
 $$ J(u) = \int_{\mathbb{R}^{n}}  \frac{1}{2} |\nabla u(x)|^2 - \frac{u(x)^{p+1}}{p+1}~ dx.$$
 If the functional $J$ assumes a global minimum, then global minimum will solve $-\Delta u = u^p$.  Applying the P\'olya-Szeg\H{o} inequality implies that $J(u^*) \leq J(u)$ showing that the existence of a minimum implies the existence of a radially symmetric solution of the partial differential equation. 
    
 \subsection{Graphs} One could ask whether similar things are possible on combinatorial graphs $G=(V,E)$. Here, $V$ is the set of vertices which we always assume to be countable, $E \subset V \times V$ is the set of edges. For our results to be meaningful, we always require that the graphs have locally finite degree.
 We recall that if
 $f:V \rightarrow \mathbb{R}$, then the $L^p-$norm of the function $f$ and its derivative $\nabla f$ are defined as
$$ \|f\|_{L^p}^p = \sum_{v \in V} |f(v)|^p \qquad \mbox{and} \qquad \| \nabla f\|^p_{L^p} = \sum_{(v,w) \in E} |f(v) - f(w)|^p.$$
 Our definition of a rearrangement on a graph follows the approach of Pruss \cite{pruss}. Given a graph $G=(V,E)$, possibly infinite, a rearrangement is a permutation of the vertices $v_1, v_2, \dots $. Having fixed such a permutation of the vertices, the rearrangement procedure for any given non-negative function $f: V \rightarrow \mathbb{R}_{\geq 0}$ defines a new function $f^*:V \rightarrow \mathbb{R}_{\geq 0}$ via
  $$f^*(v_k) = \mbox{the}~k-\mbox{th largest value assumed by}~f.$$  
  This means that $f^*$ always assumes its largest value in $v_1$, its second largest value in $v_2$ and so on.
 This construction automatically implies
   $ \|f^*\|_{L^p} = \| f\|_{L^p}$
   for all possible rearragements: the remaining question is whether there are rearrangements leading to `smooth' functions as in  $\| \nabla f^*\|_{L^p}$  being smaller or at the very least not much larger than  $\| \nabla f\|_{L^p}$. We also note the pointwise inequality $\nabla |f| \leq |\nabla f|$.

  \begin{center}
  \begin{figure}[h!]
  \begin{tikzpicture}[scale=0.7]
\node at (0,0) {\includegraphics[width=0.4\textwidth]{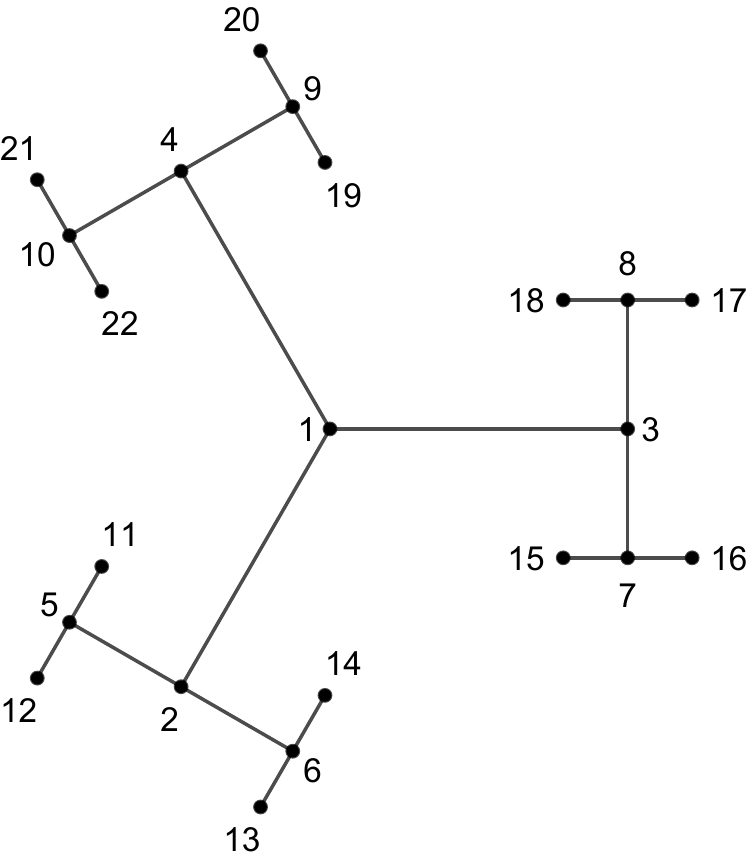}};
  \end{tikzpicture}
    \caption{Pruss showed that for the canonical rearrangement on the $d-$regular infinite tree one has $\| \nabla f^*\|_{L^2} \leq \| \nabla f\|_{L^2}$. Here: $d=3$ and only the first three layers shown.}
    \label{fig:tree}
  \end{figure}
  \end{center}
  
  The study of this problem can perhaps be said to have been initiated by Hardy-Littlewood \cite{hardy} in their work on (discrete) rearrangement inequalities on the integer lattice $\mathbb{Z}$ and the half-line $\mathbb{N}$. The conceptual leap to general graphs seems to be due to Pruss \cite{pruss} who showed that the canonical rearrangement on the infinite $d-$regular tree (see Fig. 1) satisfies the P\'olya-Szeg\H{o} inequality for $p=2$. Pruss obtains more general results, in particular a Riesz convolution-rearrangement inequality which then implies the P\'olya-Szeg\H{o} inequality for $p=2$. An example on the lattice $\mathbb{Z}$ is shown in Fig. \ref{fig:two}.    
      \begin{center}
  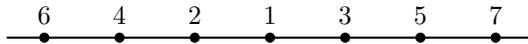
\begin{figure}[h!]
  \begin{tikzpicture}[scale=1]
\draw [thick] (0,0) -- (7,0);
\filldraw (3.5, 0) circle (0.06cm);
\filldraw (4.5, 0) circle (0.06cm);
\filldraw (2.5, 0) circle (0.06cm);
\filldraw (5.5, 0) circle (0.06cm);
\filldraw (1.5, 0) circle (0.06cm);
\filldraw (6.5, 0) circle (0.06cm);
\filldraw (0.5, 0) circle (0.06cm);
\node at (0.5, 0.3) {6};
\node at (1.5, 0.3) {4};
\node at (2.5, 0.3) {2};
\node at (3.5, 0.3) {1};
\node at (4.5, 0.3) {3};
\node at (5.5, 0.3) {5};
\node at (6.5, 0.3) {7};
  \end{tikzpicture}
    \caption{A rearrangement on the lattice $\mathbb{Z}$ satisfying the P\'olya-Szeg\H{o} inequality $\| \nabla f^*\|_{L^p} \leq \| \nabla f\|_{L^p}$ for all $1 \leq p \leq \infty$ (see \cite{haj}).}
    \label{fig:two}
  \end{figure}
  \end{center}
  
 \vspace{-20pt} 
  
Few such results are available \cite{biy, gar, gupta, haj, hhh, pruss, pruss2}; this is perhaps not too surprising, the P\'olya-Szeg\H{o} inequality reflects the overall symmetry of the Euclidean ball. One would perhaps not expect to be able to find counterparts of such results in the general discrete setting outside of a few special cases: the symmetries of the continuous setting are difficult to replace. In much the same vein, various rearrangement inequalities in the continuous setting are known to hold on $\mathbb{R}^n, \mathbb{S}^n, \mathbb{H}^n$ but not much is known (or expected to be true) on general manifolds.

      \begin{center}
  \begin{figure}[h!]
  \begin{tikzpicture}[scale=0.7]
\filldraw (0.5, 0) circle (0.06cm);
\filldraw (1.5, 0) circle (0.06cm);
\filldraw (2.5, 0) circle (0.06cm);
\filldraw (0.5, 1) circle (0.06cm);
\filldraw (1.5, 1) circle (0.06cm);
\filldraw (2.5, 1) circle (0.06cm);
\filldraw (0.5, 2) circle (0.06cm);
\filldraw (1.5, 2) circle (0.06cm);
\filldraw (2.5, 2) circle (0.06cm);
\draw  (0,0) -- (3,0);
\draw  (0,1) -- (3,1);
\draw  (0,2) -- (3,2);
\draw  (0.5,-0.5) -- (0.5,2.5);
\draw  (1.5,-0.5) -- (1.5,2.5);
\draw  (2.5,-0.5) -- (2.5,2.5);
  \end{tikzpicture}
    \caption{A part of the standard grid graph $(\mathbb{Z}^2, \ell^1)$.}
    \label{fig:gridoo}
  \end{figure}
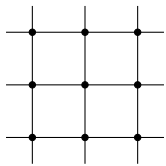
  \end{center}
  \vspace{-10pt}
The impossibility of a P\'olya-Szeg\H{o} inequality was recently very clearly demonstrated for the grid graph $(\mathbb{Z}^2, \ell^1)$. The vertices of this graph are $\mathbb{Z}^2$ and two vertices are connected if their coordinates differ in one entry by 1 (see Fig. \ref{fig:gridoo}). 

\begin{thm}[Hajaiej, Han \& Hua \cite{hhh}] There does not exist a rearrangement of $(\mathbb{Z}^2, \ell^1)$ such that for all functions
$$ \| \nabla f^* \|_{L^2} \leq \| \nabla f \|_{L^2}.$$
\end{thm}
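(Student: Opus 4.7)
The plan is a proof by contradiction: assume some ordering $v_1, v_2, \dots$ of $\mathbb{Z}^2$ gives a rearrangement with $\|\nabla f^*\|_{L^2} \leq \|\nabla f\|_{L^2}$ for every nonnegative $f$. I would extract two structural constraints from this assumption and show they are jointly incompatible.

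The first constraint comes from characteristic functions: applying the hypothesis to $f = \mathbf{1}_A$ (with $|A| = n$) gives $f^* = \mathbf{1}_{S_n}$ where $S_n := \{v_1, \dots, v_n\}$, and since $\|\nabla \mathbf{1}_C\|_{L^p}^p = |\partial C|$ for every $p$, each $S_n$ must be an edge-isoperimetric minimizer on $\mathbb{Z}^2$. The classification of such minimizers then forces $S_n$ to be (up to translation) a near-square configuration; in particular $S_{k^2}$ is a $k \times k$ square.

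The second constraint comes from two-level step functions $f = \mathbf{1}_A + \mathbf{1}_B$ with $B \subset A$. Splitting edges according to which of $B$, $A \setminus B$, $A^c$ their endpoints lie in gives
\[
\|\nabla f\|_{L^2}^2 = |\partial B| + |\partial A| + 2\, e(B, A^c),
\]
where $e(B, A^c)$ counts edges from $B$ directly to $A^c$. Writing the same identity for $f^*$ and combining with the first constraint forces
\[
e(S_{|B|}, S_{|A|}^c) \leq e(B, A^c) \qquad \text{for every nested pair } B \subset A.
\]
Crucially, the analogous $L^1$ identity reads $\|\nabla f\|_{L^1} = |\partial B| + |\partial A|$ with no cross term, so this extra constraint is purely a $p = 2$ phenomenon; the $2\,e(B,A^c)$ that survives only for $p \neq 1$ is the source of the obstruction.

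To finish, one picks $B$ strictly interior to $A$ with a one-vertex buffer (say $A$ a $(k+2) \times (k+2)$ square containing $B$ its centered $k \times k$ sub-square), so that $e(B, A^c) = 0$. The inequality would then force $S_{|B|}$ to be strictly interior to $S_{|A|}$. The hard step is to show this never occurs in any nested edge-isoperimetric sequence on $\mathbb{Z}^2$: since adding a vertex not adjacent to the current set produces four new boundary edges while the isoperimetric profile of $\mathbb{Z}^2$ increases by at most two per unit of size, each $S_{n+1}$ must be obtained from $S_n$ by attaching a boundary-adjacent vertex, so growth proceeds along one face at a time and the smaller minimizer is pushed against a side of the larger one. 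Ruling out concentric embeddings uniformly across \emph{all} admissible nested sequences — not merely for the spiral ordering, where the failure is immediate — is the principal technical obstacle.
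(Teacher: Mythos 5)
Your first constraint is correct and is in fact exactly half of the paper's argument: applying the hypothesis to $f=\mathbf{1}_A$ forces each $S_n=\{v_1,\dots,v_n\}$ to be an edge-isoperimetric minimizer. Your two-level identity $\|\nabla(\mathbf{1}_A+\mathbf{1}_B)\|_{L^2}^2=|\partial B|+|\partial A|+2e(B,A^c)$ is also correct, and the cross term is indeed the right place to look for the $p=2$ obstruction. The gap is in the final step, and it is not merely a "technical obstacle": the statement you propose to prove there is false. Taking $B$ a centered $k\times k$ square inside the $(k+2)\times(k+2)$ square $A$ forces $e(S_{k^2},S_{(k+2)^2}^c)=0$, i.e.\ $S_{k^2}$ strictly interior to $S_{(k+2)^2}$ --- but this concentric embedding \emph{is} realized by a nested sequence of minimizers, namely the spiral ordering itself: $S_1=\{(0,0)\}$ sits strictly inside $S_9=\{-1,0,1\}^2$, which sits strictly inside $S_{25}$, and so on. Your growth analysis is sound as far as it goes (adding a non-adjacent vertex costs $4$ new boundary edges while the profile increases by at most $2$, so growth is by adjacent vertices, one face at a time), but completing the four faces in rotation is a perfectly legal nested minimizer sequence and it recenters the square at every odd side length. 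So no contradiction can be extracted from this pair.

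The fix is to weight the top level, which is what the paper does. Take $f$ with values $(n,1,1,1,1)$, i.e.\ $f=(n-1)\mathbf{1}_{\{v\}}+\mathbf{1}_{N[v]}$ with $N[v]$ the closed neighborhood of $v$. Then $\|\nabla f\|_{L^2}^2=4(n-1)^2+12=4n^2-8n+16$, while any arrangement in which the largest value is not surrounded by the next four satisfies $\|\nabla f^*\|_{L^2}^2\geq n^2+3(n-1)^2=4n^2-6n+3$, which is strictly larger for $n$ large. Hence the $L^2$ inequality forces $S_5=N[v_1]$, the plus-pentomino, which has $12$ boundary edges; your first constraint forces $|\partial S_5|=10$. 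Contradiction. In your language: you need the test pair $B=\{v\}$, $A=N[v]$ (where $A$ is deliberately \emph{not} a minimizer, so that $e(B,A^c)=0$ is cheap to arrange) together with a weight $n\gg 1$ on the top level so that the cross term dominates; the fixed weight $2$ in your $\{0,1,2\}$-valued identity is exactly borderline at this size, since $4+12+0=16=4+10+2\cdot 1$ for the P-pentomino, giving equality rather than a contradiction. With that replacement your outline becomes the paper's proof.
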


We give a short proof of the general non-existence for $1 < p < \infty$ (inspired by the idea in \cite{hhh}) in \S \ref{sec:short}. It would be interesting if this failure of P\'olya-Szeg\H{o} could be made quantitative: what is the largest constant $\delta > 0$ such that for every rearrangement there exists a function with $ \| \nabla f^* \|_{L^2} > (1+\delta) \| \nabla f \|_{L^2}$? One could also wonder about substitute results: for example, we prove that there exists a rearrangement satisfying $\| \nabla f^*\|_{L^2} \leq \sqrt{2} \cdot \|\nabla f\|_{L^1}$.
It suggests a general question.

\begin{quote}
\textbf{Question.} When does a graph admit a rearrangement satisfying $ \| \nabla f^* \|_{L^p} \leq c_p \| \nabla f \|_{L^p}$? How small can $c_p$ be?  When is $c_p = 1$?
\end{quote}

The purpose of our paper is to show that the endpoint cases are quite interesting and naturally related to the edge-isoperimetric problem ($p=1$)
and the vertex-isoperimetric problem ($p=\infty$). (Note added in print: we have since considered this question on the lattice graph $\mathbb{Z}^d$ in joint work with Shubham Gupta \cite{shubham}).

    \section{Main Results}
    \subsection{Spiral Rearrangement} 
We were motivated by the spiral rearrangement illustrated in Fig. \ref{fig:spiral} and Fig. \ref{fig:example}.  It rearranges a function $f:\mathbb{Z}^2 \rightarrow \mathbb{R}_{\geq 0}$ by sending its largest value to the vertex with the label 1, the second-largest is sent to the vertex with label 2 and so on in a decreasing fashion spiraling around the origin. The choice of spiraling counterclockwise as opposed to clockwise was arbitrary. It seems like a good rearrangement on the grid graph $\mathbb{Z}^2$.

  \begin{center}
  \begin{figure}[h!]
  \begin{tikzpicture}[scale=1.4]
  \node at (0,0) {1};
    \node at (0.5,0) {2};
    \node at (0.5,0.5) {3};
  \node at (0,0.5) {4};
  \node at (-0.5,0.5) {5};
  \node at (-0.5,0) {6};
  \node at (-0.5,-0.5) {7};
  \node at (0,-0.5) {8};
  \node at (0.5,-0.5) {9};
    \node at (1,-0.5) {10};
    \node at (1,0) {11};
    \node at (1,0.5) {12};
    \node at (1,1) {13};
        \node at (0.5,1) {14};
    \node at (0,1) {15};
    \node at (-0.5,1) {16};
  \draw [->] (1 ,-0.35) -- (1,-0.15);
    \draw [->] (1 ,0.15) -- (1,0.35);
    \draw [->] (1 ,1.3/2) -- (1,1.7/2);
    \draw [->] (1.7/2,2/2) -- (1.4/2,2/2);
        \draw [->] (0.7/2,2/2) -- (0.4/2,2/2);
    \draw [->] (-0.3/2,2/2) -- (-0.6/2,2/2);
        \draw [->] (-1.3/2,2/2) -- (-1.8/2,2/2);
  \draw [->] (0.15, 0) -- (0.35, 0);
    \draw [->] (1/2, 0.3/2) -- (1/2, 0.7/2);
  \draw [->] ( 0.7/2,1/2) -- (0.3/2,1/2);
 \draw [->] ( -0.3/2,1/2) -- (-0.7/2,1/2);
 \draw [->] ( -1/2,0.7/2) -- (-1/2,0.3/2);
 \draw [->] ( -1/2,-0.3/2) -- (-1/2,-0.7/2);
 \draw [->] ( -0.7/2,-1/2) -- (-0.3/2,-1/2);
  \draw [->] ( 0.3/2,-1/2) -- (0.7/2,-1/2);
  \draw [->] ( 1.3/2 ,-1/2) -- (1.7/2,-1/2);
    \end{tikzpicture}
    \vspace{-10pt}
    \caption{The spiral rearrangement on $\mathbb{Z}^2$.}
    \label{fig:spiral}
  \end{figure}
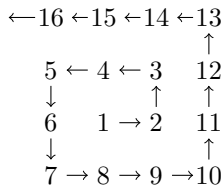
  \end{center}

\vspace{-10pt}

Our first result makes this intuition precise, the spiral rearrangement is a good way of rearranging non-negative functions (meaning functions $f:\mathbb{Z}^2 \rightarrow \mathbb{R}_{\geq 0} = [0,\infty]$): it satisfies the P\'olya-Szeg\H{o} inequality $\| \nabla f^*\|_{L^1} \leq \| \nabla f\|_{L^1}$.
  \begin{theorem}
  Let $f:\mathbb{Z}^2 \rightarrow \mathbb{R}_{\geq 0}$ and let $f^*$ denotes its spiral rearrangement. Then
  $$ \| \nabla f^*\|_{L^1} \leq  \| \nabla f\|_{L^1} \qquad \mbox{and} \qquad  \| \nabla f^*\|_{L^{\infty}} \leq  2 \cdot \| \nabla f\|_{L^{\infty}}.$$
  \end{theorem}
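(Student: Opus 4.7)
The plan is to handle the two bounds separately, each via a different isoperimetric property of $\mathbb{Z}^2$.

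For the $L^1$ inequality, I would invoke the discrete coarea identity
\[
\|\nabla f\|_{L^1} = \sum_{(v,w) \in E} |f(v)-f(w)| = \int_0^{\infty} |\partial_E\{f>t\}|\, dt,
\]
where $\partial_E S$ denotes the edge boundary of $S \subseteq V$, and similarly for $f^*$. This reduces the claim to the pointwise inequality $|\partial_E \{f^*>t\}| \leq |\partial_E \{f>t\}|$ for every $t \geq 0$. Because $f^*$ is non-increasing along the spiral, each super-level set $\{f^*>t\}$ is an initial spiral segment $S_k = \{v_1,\dots,v_k\}$ with $k = |\{f>t\}|$. Direct inspection shows that these segments fill out $n \times n$ squares at $k = n^2$ and otherwise add one vertex at a time to a side of such a square, so they coincide with the edge-isoperimetric minimizers of $\mathbb{Z}^2$ identified by Harary--Harborth, Wang--Wang, and Bollob\'as--Leader. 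Consequently $|\partial_E S_k| \leq |\partial_E T|$ for every $T \subseteq V$ with $|T|=k$; taking $T = \{f>t\}$ and integrating over $t$ yields the $L^1$ bound.

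For the $L^\infty$ inequality, fix an edge $(v_k,v_\ell) \in E(\mathbb{Z}^2)$ with $k<\ell$ and set $M := \|\nabla f\|_{L^{\infty}}$, $a := f^*(v_k)$, $b := f^*(v_\ell)$. I want $a - b \leq 2M$. Assume for contradiction $a - b > 2M$ and consider the super-level sets $A := \{f \geq a\}$ and $B := \{f > b\}$ in $V$. Distribution preservation of the rearrangement gives $|A| \geq k$ and $|B| \leq \ell - 1$. On the other hand, the hypothesis $\|\nabla f\|_{L^{\infty}} \leq M$ applied along any $\mathbb{Z}^2$-path of length at most $2$, together with $a - b > 2M$, shows that every vertex within $\mathbb{Z}^2$-distance $2$ of $A$ lies in $B$; writing $N_2$ for the closed $2$-neighborhood, $N_2(A) \subseteq B$, and hence $|N_2(A)| \leq \ell - 1$. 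This contradicts the vertex-isoperimetric inequality in $\mathbb{Z}^2$: for $|A| \geq k$ the quantity $|N_2(A)|$ is minimized by the $\ell^1$-diamond $D_k$, and one verifies $|N_2(D_k)| \geq \ell$ whenever $v_k$ and $v_\ell$ are $\mathbb{Z}^2$-adjacent.

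The principal obstacle is this last combinatorial verification. Asymptotically, a diamond $D_k$ of radius $r \approx \sqrt{k/2}$ satisfies $|N_2(D_k)| = |D_{r+2}| = k + 8r + 12 \approx k + 4\sqrt{2k}$, whereas the spiral layering forces $\ell - k \leq 4\sqrt{k} + O(1)$ for $\mathbb{Z}^2$-adjacent $v_k,v_\ell$, since a $\mathbb{Z}^2$-edge spans at most one square-to-square transition of the spiral and each such transition changes the index by at most the perimeter $\approx 4\sqrt{k}$. Since $4 < 4\sqrt{2}$, the inequality $\ell - k \leq |N_2(D_k)| - k$ holds with slack for large $k$, and the finitely many small-$k$ cases are a direct check. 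The factor $2$ in the statement (as opposed to $1$) precisely encodes the gap between the edge-isoperimetric minimizer (a square, matched by the spiral) and the vertex-isoperimetric minimizer (an $\ell^1$-diamond, which the spiral does not match).
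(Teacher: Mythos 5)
Your proposal is correct and follows essentially the same route as the paper: the $L^1$ bound via the coarea formula plus the fact that initial spiral segments minimize the edge boundary, and the $L^\infty$ bound via a two-step vertex-neighborhood comparison against the $\ell^1$-diamond (your contradiction argument is precisely the specialization of the paper's general Theorem 3 to $c=2$, and both you and the paper leave the finitely many small-index cases to direct verification). The only substantive difference is that the step you defer to ``direct inspection'' and the literature --- that the spiral's quasi-squares minimize $\# \partial_E$ in $\mathbb{Z}^2$ --- is exactly the part the paper proves self-containedly, by projecting $A$ onto the axes and refining $\# \partial_E(A) \geq 2\#A_x + 2\#A_y \geq 4\sqrt{\#A}$ using integrality; your asymptotics for the diamond (two-step neighborhood gain $\approx 4\sqrt{2k}$ versus the spiral's index gap $\approx 4\sqrt{k}$ across an edge) are the correct form of the comparison.
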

Using vector interpolation $\| v\|_{\ell^p} \leq \| v\|_{\ell^1}^{1/p} \cdot \|v\|_{\ell^{\infty}}^{1-1/p}$, we can combine these inequalities to deduce that the rearrangement is not too badly behaved in other norms
$$\| \nabla f^*\|_{L^{p}} \leq 2^{1-1/p} \cdot  \| \nabla f\|_{L^{1}} \quad \mbox{and} \quad 1 \leq p \leq \infty.$$
This leads to a number of interesting questions: is it possible to prove an estimate $ \| \nabla f^*\|_{L^p} \leq c_p  \cdot \| \nabla f\|_{L^p}$? Is $c_p \leq 2$? Which rearrangement has the smallest constant $c_p$? We refer to \cite{shubham} for some progress on these questions.
    \begin{center}
  \begin{figure}[h!]
  \begin{tikzpicture}[scale=1]
  \node at (2,0.5) {\includegraphics[width=0.25\textwidth]{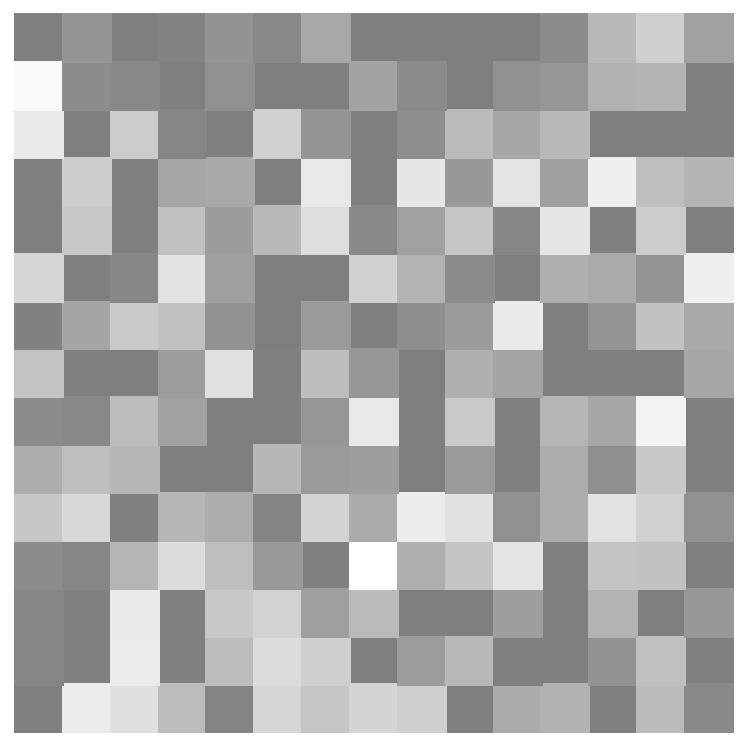}};
    \node at (7,0.5) {\includegraphics[width=0.25\textwidth]{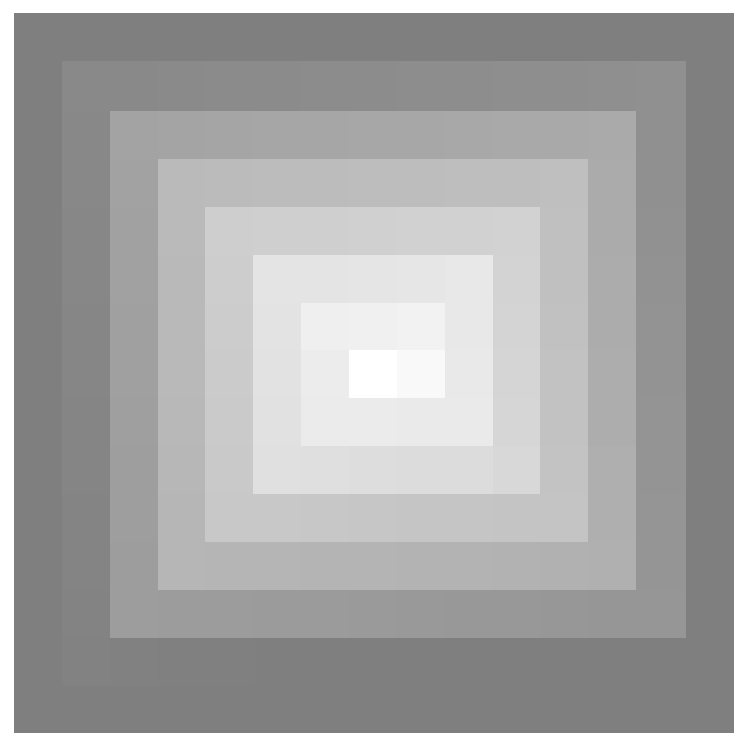}};
    \end{tikzpicture}
    \caption{Left: an example of a function $f:\mathbb{Z}^2 \rightarrow \mathbb{R}_{\geq 0}$ compactly supported around the origin (larger values are brighter). Right: the same function rearranged using the spiral rearrangement.}
      \label{fig:example}
  \end{figure}
  \end{center}
  \vspace{-10pt}
  
  Theorem 1 also suggests an interesting way to think about rearrangements on graphs in general: instead of asking for the full P\'olya-Szeg\H{o} inequality (and thus, implicitly, require a great a deal of underlying symmetry) one could instead try to look for `reasonable' rearrangements satisfying  $ \| \nabla f^*\|_{L^p} \leq c_p \| \nabla f\|_{L^p}$ with the constant $c_p \geq 1$ being as small as possible.   
  \subsection{$L^1-$P\'olya-Szeg\H{o} inequality.}
  We now provide a more general framework for the $L^1-$inequality.  As is well understood in the continuous setting, the quantity $\| \nabla f\|_{L^1}$
  is related to the level sets of the function $f$ via the coarea formula \cite{coarea} and this establishes a natural connection to isoperimetry. The same kind of argument works in the discrete setting: the relevant geometric notion will be edge-isoperimetry. For any subset $A \subseteq V$, the edge boundary $\partial_E(A)$ is defined as
$$ \partial_E (A)  = \left\{e \in E: e~\mbox{runs between}~A~\mbox{and}~V \setminus A \right\}.$$
The problem of minimizing the size of the edge boundary $\# \partial_E   (\left\{v_1, \dots, v_k\right\}) $  among all sets of $k$ vertices, also known as the edge-isoperimetric problem, is well-studied in a variety of settings (see, for example, Bollob\'as \& Leader \cite{boll, boll2}, 
Harper \cite{harper}, Lindsey \cite{lindsey}, Tillich \cite{tillich} and the survey of Bezrukov \cite{bez}).

      \begin{center}
  \begin{figure}[h!]
  \begin{tikzpicture}[scale=0.8]
  \foreach \x in {1,2,...,5}
    \foreach \y in {2,3,...,5}
      { \filldraw (\x,\y) circle (0.06cm);
        }
    \draw (1, 1.5) -- (1, 5.5);
        \draw (2, 1.5) -- (2, 3);
            \draw[dashed] (2, 3) -- (2, 5);
 \draw (2, 5) -- (2, 5.5);
  \draw (3, 1.5) -- (3, 2);
            \draw[dashed] (3, 2) -- (3, 5);
            \draw (3,3) -- (3,4);
             \draw (4,3) -- (4,4);
 \draw (3, 5) -- (3, 5.5);
   \draw (4, 1.5) -- (4, 2);
            \draw[dashed] (4, 2) -- (4, 5);
 \draw (4, 5) -- (4, 5.5);
 \draw (0.5, 2) -- (5.5, 2);
  \draw (0.5, 5) -- (5.5, 5);
  \draw (0.5, 3) -- (2, 3);
  \draw [dashed] (2,3) -- (5, 3);
    \draw  (3,3) -- (4, 3);
      \draw (0.5, 4) -- (1, 4);
  \draw [dashed] (1,4) -- (5, 4);
    \draw  (2,4) -- (4, 4);
    \draw (5, 3) -- (5.5, 3);
    \draw (5, 1.5) -- (5, 5.5);
    \filldraw (3,3) circle (0.15cm);
        \filldraw (4,3) circle (0.15cm);
    \filldraw (4,4) circle (0.15cm);
    \filldraw (3,4) circle (0.15cm);
        \filldraw (2,4) circle (0.15cm);
  \draw [thick] (3, 2.3) to[out=0, in=270] (4.6, 3.5) to[out=90, in=0] (3, 4.6) to[out=180, in=90] (1.5, 4) to[out=270, in=180] (2.3, 3.3) to[out=0, in=180] (3, 2.3);      
  \end{tikzpicture}
    \caption{5 vertices (highlighted) with $\# \partial_E(A) = 10$. $A$ solves the edge-isoperimetric problem: any set of 5 vertices in $(\mathbb{Z}^2, \ell^1)$ has a at least 10 edges (dashed) connecting it to the complement.}
    \label{fig:edge}
  \end{figure}
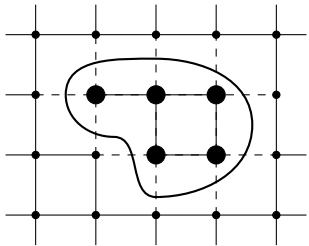
  \end{center}
  
  \vspace{-0pt}

Theorem 2 will prove that if solutions to the edge-isoperimetric problem are nested, meaning that an edge-isoperimetric set for $k+1$ elements can be obtained by adding a suitable vertex to an edge-isoperimetric set with $k$ elements, then such a nested set of extremizers correspond to a rearrangement satisfying the $L^1-$P\'olya-Szeg\H{o} inequality  $\| \nabla f^*\|_{L^1} \leq  \| \nabla f\|_{L^1}$. The proof shows slightly more: `nearly' optimal sets `nearly' imply the inequality in $L^1$ (however, this extension will not be used anywhere else in the paper).

\begin{theorem}[$L^1-$P\'olya-Szeg\H{o}] Let $v_1, v_2, \dots$ be a permutation of the vertices $V$ such that, for some $\alpha \geq 1, \beta \geq 0$ and all $N \in \mathbb{N}$,  
$$ \# \partial_E (\left\{v_1, \dots, v_N\right\}) \leq \beta +   \alpha  \inf_{ A \subseteq  V \atop  \# A =N}~~ \#~ \partial_E (A).$$
Then the associated rearrangement satisfies
  $$ \| \nabla f^*\|_{L^1} \leq  \alpha \| \nabla f\|_{L^1} +  \beta \|f\|_{L^{\infty}}.$$
\end{theorem}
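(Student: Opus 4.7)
My strategy is the natural one: reduce the inequality to a pointwise statement about edge-boundaries of superlevel sets via a discrete coarea formula, and then apply the hypothesis layer by layer.

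The first step is to establish a discrete coarea identity
$$\|\nabla f\|_{L^1} = \int_0^\infty \#\partial_E\bigl(\{v \in V : f(v) \geq t\}\bigr)\, dt.$$
This follows from the layer-cake decomposition $|f(v)-f(w)| = \int_0^\infty \mathbf{1}[\min(f(v),f(w)) < t \leq \max(f(v),f(w))]\, dt$ applied edge by edge: the indicator equals $1$ exactly when $(v,w) \in \partial_E(\{f \geq t\})$. Swapping sum and integral gives the identity, and of course the same identity holds for $f^*$.

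The second step exploits the definition of the rearrangement. Since $f^*$ has the same distribution as $f$, $N(t) := \#\{f^* \geq t\} = \#\{f \geq t\}$, and by construction $\{f^* \geq t\}$ is precisely the initial segment $\{v_1, \dots, v_{N(t)}\}$ of the fixed ordering. Applying the hypothesis at this value of $N$, and using that $\{f \geq t\}$ is itself a set of size $N(t)$ and hence its edge-boundary is at least the infimum, yields the pointwise inequality
$$\#\partial_E\bigl(\{f^* \geq t\}\bigr) \leq \beta + \alpha\, \#\partial_E\bigl(\{f \geq t\}\bigr)$$
for every $t > 0$ with $N(t) \geq 1$; both sides vanish once $t > \|f\|_\infty$.

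The third step is simply to integrate this pointwise bound over $t \in (0,\infty)$: the $\alpha$-term yields $\alpha\|\nabla f\|_{L^1}$ by the coarea identity, while the $\beta$-term integrates against the Lebesgue measure of $\{t > 0 : N(t) \geq 1\} = (0, \|f\|_\infty]$, producing $\beta \|f\|_\infty$. I do not anticipate a real obstacle here: the only subtle point is handling ties in the values of $f$ correctly, which is automatic because the hypothesis only references the cardinality $N(t)$ and not a specific choice of vertices realizing it. In effect, the entire argument is a coarea reduction followed by a single termwise application of the isoperimetric hypothesis.
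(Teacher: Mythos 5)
Your proposal is correct and follows essentially the same route as the paper: a discrete coarea formula for $\|\nabla f\|_{L^1}$, the observation that $\{f^*\geq t\}$ is the initial segment $\{v_1,\dots,v_{N(t)}\}$ with $N(t)=\#\{f\geq t\}$, and a termwise application of the isoperimetric hypothesis followed by integration in $t$. The only cosmetic difference is that you state the comparison as a direct pointwise inequality between the two boundaries before integrating, whereas the paper bounds $\#\partial_E\{f\geq s\}$ from below by the infimum and then solves for $\|\nabla f^*\|_{L^1}$; the content is identical.
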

The best possible case is $\alpha =1$ and $\beta = 0$.
Several of the known edge-isoperimetric sets in different graphs such as $(\mathbb{Z}^d, \ell^1)$ or $(\mathbb{Z}^d, \ell^{\infty})$ have this property. 
We also note that Theorem 2 is optimal: consider the indicator function $f = \chi_{\left\{v_1, \dots, v_N\right\}}$, then
$$  \| \nabla f\|_{L^1}  =  \# \partial_E (\left\{v_1, \dots, v_N\right\})$$
and this identity can now be applied twice, a second time to the set of vertices minimizing $ \#~ \partial_E (A)$ to deduce that the inequality cannot be improved in general.

  \subsection{$L^{\infty}-$P\'olya-Szeg\H{o}} We propose a condition that can be used to show that certain rearrangements satisfy  $\| \nabla f^*\|_{L^{\infty}} \leq c  \| \nabla f\|_{L^{\infty}}$ for some constant $c \in \mathbb{N}$. In contrast to the $L^1-$theory, which was concerned with edge-isoperimetric problems, the $L^{\infty}-$theory requires vertex-isoperimetry: given a graph $G$ and an integer $k \in \mathbb{N}$, we define the vertex-isoperimetric profile $\partial_{V}(k)$ as the largest integer such that any set $A \subset V$ with $\# A = k$ vertices is adjacent to at least $\partial_{V}(k)$ other vertices.
  
  \begin{theorem}[$L^{\infty}-$P\'olya-Szeg\H{o}]  Let $G= (V,E)$ be an infinite graph and assume that $\partial_{V}:\mathbb{N} \rightarrow \mathbb{N}$ is non-decreasing.
 Suppose $v_1, v_2, \dots$ is a permutation of the vertices so that, for some $c \in \mathbb{N}$ and all $N \in \mathbb{N}$ 
   $$ \left\{v \in V:  \min_{1 \leq i \leq N} d(v, v_i) \leq 1 \right\} \subseteq \left\{v_1, v_2, \dots, v_{ N + c \cdot \partial_{V}(N)} \right\}.$$
Then the rearrangement associated to the permutation  $v_1, v_2, \dots$ satisfies
  $$\| \nabla f^*\|_{L^{\infty}} \leq c \cdot  \| \nabla f\|_{L^{\infty}}.$$
  \end{theorem}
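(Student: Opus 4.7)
The plan is to unfold the $L^\infty$-norm edge by edge. Write $M = \|\nabla f\|_{L^\infty}$ and let $a_1 \ge a_2 \ge \dots$ denote the values of $f$ arranged in non-increasing order, so that $f^*(v_k) = a_k$ by construction of the rearrangement. It suffices to prove that $a_p - a_q \le cM$ whenever $(v_p, v_q) \in E$ and $p \le q$.

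First I would use the hypothesis to bound the index spread of edges. Applied with $N = p$, it asserts that every vertex adjacent to $\{v_1, \dots, v_p\}$ lies among the first $p + c\,\partial_V(p)$ entries of the permutation; in particular $q \le p + c\,\partial_V(p)$. Since the sequence $(a_k)$ is non-increasing, it therefore suffices to show
$$a_p - a_{p + c\,\partial_V(p)} \le cM.$$

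The core step is an iterated neighborhood expansion on $G$. Let $T_0 \subseteq V$ be the set of $p$ vertices on which $f$ attains its $p$ largest values, and inductively define $T_{k+1}$ to be $T_k$ together with all vertices of $V \setminus T_k$ adjacent to $T_k$. Two facts drive the induction. First, because $f$ changes by at most $M$ along any edge, every newly added vertex has a neighbor in $T_k$ whose $f$-value is at least $a_p - kM$, which propagates the bound $f \ge a_p - (k+1)M$ to all of $T_{k+1}$. Second, the definition of the vertex-isoperimetric profile gives $|T_{k+1}| \ge |T_k| + \partial_V(|T_k|)$, and the monotonicity of $\partial_V$ combined with $|T_k| \ge p$ upgrades this to $|T_{k+1}| \ge |T_k| + \partial_V(p)$. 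After $c$ iterations I have $|T_c| \ge p + c\,\partial_V(p)$ with $f \ge a_p - cM$ throughout $T_c$, which forces $a_{p + c\,\partial_V(p)} \ge a_p - cM$ and closes the argument.

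The main obstacle is ensuring that the iteration continues to add at least $\partial_V(p)$ vertices at every stage, even though the intermediate sets $T_k$ are typically not themselves vertex-isoperimetric optimizers; this is exactly where the monotonicity assumption on $\partial_V$ is spent. Modulo this observation the proof is a clean discrete analog of the coarea-style reasoning underlying the continuous Polya-Szeg\H{o} inequality.
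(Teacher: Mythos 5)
Your argument is essentially the paper's own proof: the paper takes $A=\{v: f(v)\ge f^*(v_i)\}$ and inductively shows its distance-$k$ neighborhood has at least $i+k\cdot\partial_V(i)$ elements on which $f\ge f^*(v_i)-k\|\nabla f\|_{L^\infty}$, which is exactly your iterated expansion $T_0\subset T_1\subset\dots\subset T_c$, with monotonicity of $\partial_V$ spent in the same place. The proposal is correct and matches the paper's approach.
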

 The condition in Theorem 3 can be summarized as follows: when considering the set of vertices $\left\{v_1, \dots, v_N\right\}$, this
 set is guaranteed to be adjacent to at least $\partial_V(N)$ other vertices (and it might be adjacent to many more). In particular, there
 is at least one vertex $v_j$ adjacent to $\left\{v_1, \dots, v_N\right\}$ with $j \geq N + \partial_V(N)$. Theorem 3 guarantees that
 if there is an inverse inequality up to a multiplicative constant, meaning that if none of the adjacent indices exceed $N + c \cdot \partial_V(N)$ for some $c \in \mathbb{N}$, then this implies an $L^{\infty}-$P\'olya-Szeg\H{o} inequality with the same constant $c$.

\subsection{The full range.} We conclude with a setting for which we can obtain the full range $1 \leq p \leq \infty$ of the P\'olya-Szeg\H{o} inequality. Let $v_1, v_2, \dots$ be an arbitrary permutation of the vertices. The vertex-isoperimetric profile and the definition of edge- and vertex-neighborhood are naturally related and 
$$  \partial_V(N) = \inf_{A \subset V \atop \# A = N}  \# \partial_V \left( A\right) \leq  \# \partial_V \left( \left\{v_1, \dots, v_N\right\} \right) \leq  \# \partial_E \left( \left\{v_1, \dots, v_N\right\} \right).$$
If all three quantities coincide and the vertex-neighborhood is optimally arranged, then the rearrangement satisfies the P\'olya-Szeg\H{o} inequality for all $1 \leq p \leq \infty$.
\begin{theorem} Let $G= (V,E)$ be a graph and let $v_1, v_2, \dots$ be a permutation of the vertices so that for all $N \in \mathbb{N}$ we have both
$ \partial_V(N) =   \# \partial_E \left( \left\{v_1, \dots, v_N\right\} \right)$ and
$$  \partial_E \left( \left\{v_1, \dots, v_N\right\} \right) \subseteq \left\{v_1, \dots, v_{N+ \partial_V(N)} \right\}.$$
Then the associated rearrangement satisfies, for all $1 \leq p \leq \infty$ ,
$$ \| \nabla f^* \|_{L^p} \leq \| \nabla f \|_{L^p}.$$
\end{theorem}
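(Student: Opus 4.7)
The plan is to reduce Theorem 4 to a single ``two-level'' isoperimetric comparison and then recover $\|\nabla f^*\|_{L^p} \leq \|\nabla f\|_{L^p}$ for every $1 \leq p \leq \infty$ from a layer-cake representation of the gradient $L^p$-norm that refines the coarea identity used for $L^1$. Write $A_N = \{v_1,\dots,v_N\}$ and, for $X,Y \subseteq V$, let $\partial(X,Y)$ denote the set of edges joining $X$ with $Y$. First I unpack the hypotheses: the chain $\partial_V(N) \leq \#\partial_V(A_N) \leq \#\partial_E(A_N) = \partial_V(N)$ forces equalities throughout, so each vertex of the vertex boundary of $A_N$ is joined to $A_N$ by exactly one edge; combined with $\partial_E(A_N) \subseteq \{v_1,\dots,v_{N+\partial_V(N)}\}$ this forces the set equality $\partial_V(A_N) = \{v_{N+1},\dots,v_{N+\partial_V(N)}\}$. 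Consequently, for any $N_2 \leq N_1$,
\[
    \#\partial(A_{N_2},\, V\setminus A_{N_1}) \,=\, \max\bigl(0,\; N_2 + \partial_V(N_2) - N_1\bigr).
\]

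The key lemma is that, for arbitrary nested sets $C \subseteq B \subseteq V$ with $|C|=N_2$ and $|B|=N_1$,
\[
    \#\partial(C,\, V\setminus B) \,\geq\, \max\bigl(0,\; N_2 + \partial_V(N_2) - N_1\bigr),
\]
so that $\#\partial(A_{N_2},V\setminus A_{N_1}) \leq \#\partial(C,V\setminus B)$. This is a one-step decomposition of the vertex boundary of $C$: since $\partial_V(C) = [\partial_V(C) \cap (B\setminus C)] \sqcup [\partial_V(C)\cap(V\setminus B)]$ and every vertex of the second piece contributes at least one edge to $\partial(C,V\setminus B)$,
\[
    \partial_V(N_2) \,\leq\, \#\partial_V(C) \,\leq\, (N_1 - N_2) + \#\partial(C, V\setminus B),
\]
which rearranges to the stated bound.

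To convert the lemma into an $L^p$ inequality I combine it with the identity $x^p = p(p-1) \int_0^\infty \delta^{p-2}(x-\delta)_+\, d\delta$, valid for $x\geq 0$ and $p > 1$. Summation over edges gives
\[
    \|\nabla f\|_{L^p}^p \,=\, p(p-1)\int_0^\infty \delta^{p-2}\, G_\delta(f)\, d\delta, \qquad G_\delta(f) := \sum_{e\in E}(|\nabla f(e)| - \delta)_+,
\]
and an edgewise layer cake in the threshold $s$ yields the generalized coarea identity $G_\delta(f) = \int \#\partial(\{f > s+\delta\},\, V\setminus\{f > s\})\, ds$. Since $f$ and $f^*$ are equidistributed, $|\{f^*>s\}| = |\{f>s\}|$ and likewise for $s+\delta$; applying the key lemma with $C = \{f > s+\delta\}$ and $B = \{f > s\}$ at every $s$ yields $G_\delta(f^*) \leq G_\delta(f)$ pointwise in $\delta$. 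Integrating in $\delta$ against $\delta^{p-2}$ gives $\|\nabla f^*\|_{L^p} \leq \|\nabla f\|_{L^p}$ for all $1 < p < \infty$; the endpoint $p = 1$ is the $\delta = 0$ case (where $G_0 = \|\nabla\cdot\|_{L^1}$ is classical coarea), and for $p = \infty$ the vanishing $G_\delta(f^*) \leq G_\delta(f) = 0$ whenever $\delta \geq \|\nabla f\|_{L^\infty}$ forces $\|\nabla f^*\|_{L^\infty} \leq \|\nabla f\|_{L^\infty}$. The main obstacle is spotting the right intermediate quantity $G_\delta$ and its generalized coarea identity; once those are in place, the isoperimetric input collapses to the clean one-line double counting above, which uses the first hypothesis to produce the explicit LHS and the definition of $\partial_V(N_2)$ to bound the RHS.
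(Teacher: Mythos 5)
Your proof is correct, and it takes a genuinely different route from the paper's, though the two are Fubini-related. The paper proves a ``modified coarea formula'' $\|\nabla f\|_{L^p}^p = p\int_0^1\int_{\partial_E\{f\geq s\}}|\nabla\min(f,s)|^{p-1}\,dx\,ds$ and then, for each fixed level $s$, bounds the inner sum from below by a value-rearrangement argument: the level set $\{f\geq s\}$ is adjacent to at least $\partial_V(\#\{f\geq s\})$ distinct vertices, and the sum of $(s-f(w))^{p-1}$ over the corresponding edges is smallest when those adjacent vertices carry the largest values below $s$ and meet the level set in exactly one edge each --- which is precisely the situation for $f^*$ under the two hypotheses. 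Expanding $(s-f(w))^{p-1}=(p-1)\int_0^{s-f(w)}\delta^{p-2}\,d\delta$ and swapping integrals turns the paper's identity into your double layer-cake, so the representations are equivalent; what is genuinely different is where the isoperimetric input enters. You isolate it as the two-level counting lemma $\#\partial(C,V\setminus B)\geq\max(0,\,N_2+\partial_V(N_2)-N_1)$ for nested $C\subseteq B$, matched with equality by initial segments of the permutation, and this buys two things the paper's write-up leaves implicit: a single monotone quantity $G_\delta$ whose pointwise domination $G_\delta(f^*)\leq G_\delta(f)$ is stronger than any individual $L^p$ bound, and a uniform treatment of both endpoints (the paper's coarea lemma is stated for $1\leq p<\infty$ and the $p=\infty$ case of Theorem 4 is not separately argued, whereas your vanishing of $G_\delta$ for $\delta\geq\|\nabla f\|_{L^\infty}$ handles it cleanly). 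One small presentational point: your set equality $\partial_V(A_N)=\{v_{N+1},\dots,v_{N+\partial_V(N)}\}$ reads the hypothesis $\partial_E(A_N)\subseteq\{v_1,\dots,v_{N+\partial_V(N)}\}$ as a statement about the outer endpoints of the boundary edges; that is how the paper intends its (slightly abusive) notation, and with that reading every step checks out.
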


    As a first example we revisit the canonical reordering on the lattice $\mathbb{Z}$ (see Fig. \ref{fig:line}) for which it is known that 
 $\| \nabla f^*\|_{L^p} \leq \| \nabla f\|_{L^p}$ for all $1 \leq p \leq \infty$ (see \cite{haj}). The permutation satisfies $ \partial_V(N) =  2 =  \# \partial_E \left( \left\{v_1, \dots, v_N\right\} \right)$ and the neighbors of the first $N$ elements are the first $N+2$ elements, thus Theorem 4 applies.
    
      \begin{center}
  \begin{figure}[h!]
  \begin{tikzpicture}[scale=1]
\draw [thick] (0,0) -- (7,0);
\filldraw (3.5, 0) circle (0.06cm);
\filldraw (4.5, 0) circle (0.06cm);
\filldraw (2.5, 0) circle (0.06cm);
\filldraw (5.5, 0) circle (0.06cm);
\filldraw (1.5, 0) circle (0.06cm);
\filldraw (6.5, 0) circle (0.06cm);
\filldraw (0.5, 0) circle (0.06cm);
\node at (0.5, 0.3) {6};
\node at (1.5, 0.3) {4};
\node at (2.5, 0.3) {2};
\node at (3.5, 0.3) {1};
\node at (4.5, 0.3) {3};
\node at (5.5, 0.3) {5};
\node at (6.5, 0.3) {7};
  \end{tikzpicture}
    \caption{A rearrangement on $\mathbb{Z}$ satisfying $\| \nabla f^*\|_{L^p} \leq \| \nabla f\|_{L^p}$.}
    \label{fig:line}
  \end{figure}
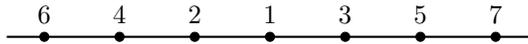
  \end{center}

  \subsection{Example: the regular tree}  
  Let $G$ be the infinite $d-$regular tree. There is a canonical order where the largest value of the function $f$ is sent to
  the root, the next $d$ values are distributed over the children of the root, the $d-1$ values after that are attached to the children of the vertex labeled 2 and so on (see Fig. \ref{fig:tree}).

    \begin{corollary}
    Let $G$ be an infinite $d-$regular tree and let $f^*$ denote the rearrangement corresponding to the canonical ordering. Then, for all $1 \leq p \leq \infty$
  $$ \| \nabla f^*\|_{L^p} \leq  \| \nabla f\|_{L^p}.$$
    \end{corollary}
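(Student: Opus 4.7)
The plan is to derive the corollary as an immediate consequence of Theorem 4 applied to the canonical BFS-style ordering $v_1,v_2,\dots$ of the infinite $d$-regular tree. Theorem 4 requires verifying, for every $N$, that $\partial_V(N)=\#\partial_E(\{v_1,\dots,v_N\})$ and that $\partial_E(\{v_1,\dots,v_N\})\subseteq\{v_1,\dots,v_{N+\partial_V(N)}\}$.

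I would first exploit the acyclicity of the tree. The initial segment $A_N=\{v_1,\dots,v_N\}$ produced by the canonical ordering is a connected subtree rooted at $v_1$, hence contains exactly $N-1$ internal edges. A short handshake count summing degrees over $A_N$ then gives $\#\partial_E(A_N)=Nd-2(N-1)=N(d-2)+2$. Because the ambient graph has no cycles, two distinct boundary edges cannot share an external endpoint, so $\#\partial_V(A_N)=\#\partial_E(A_N)$. The same degree identity applied to an arbitrary $B\subseteq V$ with $|B|=N$ and $c\ge 1$ connected components yields $\#\partial_V(B)=N(d-2)+2c\ge N(d-2)+2$, so $A_N$ is a vertex-isoperimetric minimizer and the first hypothesis is established.

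For the second hypothesis I would unwind the BFS definition of the canonical ordering: when a vertex $v_j$ is processed, its children---which, in the rooted tree, are precisely its neighbors outside $A_{j-1}$---are appended to the queue. Hence the queue right after $v_N$ is added consists exactly of the vertex boundary of $A_N$, so the next $\partial_V(N)$ vertices to be drawn from the queue are precisely the external endpoints of the edges in $\partial_E(A_N)$. This gives $\partial_E(A_N)\subseteq\{v_1,\dots,v_{N+\partial_V(N)}\}$, and Theorem 4 then delivers $\|\nabla f^*\|_{L^p}\le\|\nabla f\|_{L^p}$ for all $1\le p\le\infty$. I do not foresee any real obstacle: every step is made rigid by the acyclicity of the tree, which simultaneously forces edge- and vertex-boundaries to coincide, penalizes disconnection, and makes the BFS frontier literally equal to the vertex boundary at every stage.
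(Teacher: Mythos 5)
Your overall strategy is exactly the paper's: verify the two hypotheses of Theorem 4 for the canonical BFS ordering. Your handshake computation $\#\partial_E(A_N)=Nd-2(N-1)=N(d-2)+2$ for the initial segments is correct, and your BFS-queue argument for the containment $\partial_E(A_N)\subseteq\{v_1,\dots,v_{N+\partial_V(N)}\}$ is a clean alternative to the paper's explicit layer-by-layer count with $N_{k-1}<N\leq N_k$. However, there is a genuine gap in your verification that $A_N$ attains the vertex-isoperimetric minimum. For a disconnected set $B$ with $c$ components, the degree identity gives $\#\partial_E(B)=N(d-2)+2c$, \emph{not} $\#\partial_V(B)=N(d-2)+2c$: two distinct boundary edges can perfectly well share an external endpoint when their internal endpoints lie in different components of $B$, and acyclicity does not forbid this. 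Concretely, for $d=3$ take $B=\{u,v\}$ with $d(u,v)=2$; then $c=2$ and your formula predicts $\#\partial_V(B)=6$, but the midpoint is adjacent to both $u$ and $v$ and is counted once, so $\#\partial_V(B)=5$. Hence $\#\partial_V(B)$ can drop strictly below $N(d-2)+2c$, and your identity does not deliver the needed lower bound $\#\partial_V(B)\geq N(d-2)+2$.

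That lower bound is true (in the example, $5\geq 2(d-2)+2=4$), but it is exactly the nontrivial content of the paper's Folklore Lemma, which proves it by induction on $\#A$: root the tree at some $v_0\in A$, remove a vertex $v_1\in A$ at maximal distance from $v_0$, note that its $d-1$ deeper neighbors cannot be adjacent to $A\setminus\{v_1\}$, and track how the vertex boundary changes when $v_1$ is reinserted; this simultaneously shows that only connected sets can be vertex-boundary minimizers, at which point your edge/vertex coincidence for connected sets finishes the computation. You need to supply an argument of this kind (or cite the vertex-isoperimetric inequality on regular trees) to close the gap; once that is done, the remainder of your proof goes through.
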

    This extends a result of Pruss \cite{pruss} who showed that the canonical rearrangement satisfies 
  $ \| \nabla f^*\|_{L^2} \leq  \| \nabla f\|_{L^2}.$ 
The proof shows how the canonical ordering on the infinite regular tree is optimal with respect to both edge isoperimetry and vertex isoperimetry.

      \subsection{Example: competing rearrangements.} 
    As another example to illustrate the applicability of the results, we consider the graph $\mathbb{N} \times \left\{0,1\right\}$ with two vertices being connected if their Hamming distance differs by 1, see Fig. \ref{fig:spiral2}. There are at least two different rearrangements that appear to be somewhat natural on this graph: they are shown in Fig. \ref{fig:spiral2} and we will refer to them as the snake rearrangement and the lexicographic rearrangement.

  \begin{center}
  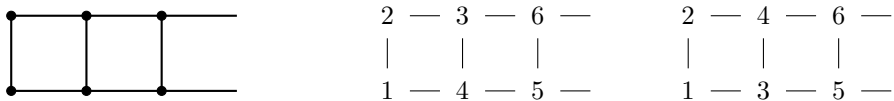
\begin{figure}[h!]
  \begin{tikzpicture}[scale=1]
  \filldraw (0,0) circle (0.06cm);
    \filldraw (1,0) circle (0.06cm);
  \filldraw (0,1) circle (0.06cm);
  \filldraw (1,1) circle (0.06cm);
  \filldraw (2,0) circle (0.06cm);
  \filldraw (2,1) circle (0.06cm);
\draw [thick] (0,0) -- (3,0);
\draw [thick] (0,1) -- (3,1);
\draw [thick] (0,0) -- (0,1);
\draw [thick] (1,0) -- (1,1);
\draw [thick] (2,0) -- (2,1);
  \node at (5,0) {1};
    \node at (5,1) {2};
    \node at (6,1) {3};
  \node at (6,0) {4};
    \node at (7,0) {5};
      \node at (7,1) {6};
  \draw [-] (5, 0.3) -- (5, 0.7);
  \draw [-] (5.3, 1) -- (5.7, 1);
    \draw [-] (5.3, 0) -- (5.7, 0);
  \draw [-] (6, 0.7) -- (6, 0.3);
 \draw [-] (6.3, 0) -- (6.7, 0);    
  \draw [-] (6.3, 1) -- (6.7, 1);   
  \draw [-] (7, 0.3) -- (7, 0.7);   
 \draw [-] (7.3, 1) -- (7.7, 1);  
  \draw [-] (7.3, 0) -- (7.7, 0);  
  \node at (9,0) {1};
    \node at (9,1) {2};
    \node at (10,1) {4};
  \node at (10,0) {3};
    \node at (11,0) {5};
      \node at (11,1) {6};              
  \draw [-] (9, 0.3) -- (9, 0.7);
  \draw [-] (9.3, 1) -- (9.7, 1);
    \draw [-] (9.3, 0) -- (9.7, 0);
  \draw [-] (10, 0.7) -- (10, 0.3);
 \draw [-] (10.3, 0) -- (10.7, 0);    
  \draw [-] (10.3, 1) -- (10.7, 1);   
  \draw [-] (11, 0.3) -- (11, 0.7);   
 \draw [-] (11.3, 1) -- (11.7, 1);  
  \draw [-] (11.3, 0) -- (11.7, 0);               
    \end{tikzpicture}
    \caption{The graph $(\mathbb{N} \times \left\{0,1\right\}, \ell^1)$ (left), the snake rearrangement (middle) and the lexicographic rearrangement (right).}
    \label{fig:spiral2}
  \end{figure}
  \end{center}
    \vspace{-10pt}
  
  Applying Theorem 2 and Theorem 3 allows us to quickly derive suitable bounds for both rearrangements. The snake rearrangement
  is well behaved with respect to the edge-isoperimetric properties and satisfies the Poly\'a-Szeg\H{o} inequality in $L^1$. The 
  lexicographic rearrangement is well-behaved in both $L^1$ and $L^{\infty}$.
  
  \begin{corollary} The snake rearrangement satisfies
  $$ \| \nabla f^*\|_{L^1} \leq  \| \nabla f\|_{L^1} \qquad \mbox{and} \qquad  \| \nabla f^*\|_{L^{\infty}} \leq  2 \cdot \| \nabla f\|_{L^{\infty}}.$$
The lexicographic rearrangement satisfies
  $$ \| \nabla f^*\|_{L^1} \leq  \| \nabla f\|_{L^1} \qquad \mbox{and} \qquad  \| \nabla f^*\|_{L^{\infty}} \leq  \| \nabla f\|_{L^{\infty}}.$$
  \end{corollary}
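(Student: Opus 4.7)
The plan is to apply Theorem 2 to obtain both $L^1$ bounds and Theorem 3 to obtain both $L^\infty$ bounds, with the correct constant in each case. Before invoking either, I would first pin down the two relevant isoperimetric profiles of the infinite ladder $G = (\mathbb{N} \times \{0,1\}, \ell^1)$. Taking a block of $k$ consecutive columns (and, for odd cardinalities, one extra vertex of the next column) shows
\[
\partial_E^{\min}(N) \leq \begin{cases} 2, & N~\text{even}, \\ 3, & N~\text{odd}, \end{cases} \qquad \partial_V(N) \leq 2 \text{ for every } N \geq 1.
\]
The matching lower bounds come from a short case analysis on the column-occupation numbers $a_i = \#(A \cap (\{i\} \times \{0,1\})) \in \{0,1,2\}$: every $i$ with $a_i = 1$ contributes a vertical boundary edge, and every transition between a zero value and a positive value contributes two horizontal boundary edges. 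In particular $\partial_V(N) = 2$ is non-decreasing, so the hypothesis of Theorem 3 is not a problem.

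For the $L^1$ part, I would observe that for both the snake and the lexicographic ordering the set $\{v_1,\dots,v_{2k}\}$ is exactly the first $k$ columns (so $\#\partial_E = 2$), while $\{v_1,\dots,v_{2k+1}\}$ adjoins a single vertex of column $k$ (so $\#\partial_E = 3$). Both permutations are therefore edge-isoperimetric for every $N$, and Theorem 2 applied with $\alpha = 1$, $\beta = 0$ immediately gives $\|\nabla f^*\|_{L^1} \leq \|\nabla f\|_{L^1}$ in each case.

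For the $L^\infty$ part I would verify the hypothesis of Theorem 3 by explicitly listing the closed $1$-neighborhood of $\{v_1,\dots,v_N\}$. In the lexicographic ordering the neighborhood adds at most the two vertices $(k,1)$ and $(k+1,0)$ (or $(k,0)$ and $(k,1)$ when $N$ is even), and in either parity these carry the labels $N+1$ and $N+2$; since $\partial_V(N) = 2$ the condition holds with $c = 1$. The even case of the snake ordering is equally clean, but for $N = 2k+1$ the snake traverses column $k+1$ in the direction opposite to the row occupied by $v_{2k+1}$, so the same-row neighbor in column $k+1$ carries label $N+3$ rather than $N+1$. This forces $c \cdot \partial_V(N) \geq 3$, and the smallest admissible integer is $c = 2$.

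The argument is essentially verification and I do not expect any genuine obstacle. The only mildly delicate point is the snake case for odd $N$: one must check both parities of $k$ to confirm that the worst neighbor label is exactly $N+3$ (and not something larger hidden behind the reversed traversal of column $k+1$), so that Theorem 3 with $c = 2$ yields precisely the constant appearing in the corollary.
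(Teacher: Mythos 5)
Your proposal is correct and follows essentially the same route as the paper: edge-isoperimetry of both orderings feeds Theorem 2 with $\alpha=1$, $\beta=0$, and the explicit neighbor labels ($N+2$ for the lexicographic ordering, $N+3$ for the snake at odd $N$) feed Theorem 3 with $c=1$ and $c=2$ respectively. Two small bookkeeping slips to repair in the write-up: the minimum of $\#\partial_E$ over sets of odd cardinality is $2$, not $3$, at $N=1$ (a corner vertex of the ladder has only two neighbors) --- harmless, since both orderings also achieve $2$ there --- and a column with $a_i=1$ adjacent to an empty column contributes only \emph{one} horizontal boundary edge, not two, so the lower bound for odd $N\geq 3$ should instead be argued as the paper does: one horizontal boundary edge per occupied row (going toward infinity), plus one vertical boundary edge from some column with $a_i=1$, whose existence is guaranteed by parity.
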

  
  These examples illustrate how different rearrangements on the same graph can lead to different outcomes and how the size of the constant $c_p$ in $\| \nabla f^*\|_{L^p} \leq c_p\| \nabla f\|_{L^p}$ can be used as an implicit measure of quality of the rearrangement.

    \section{Proofs}
We start by establishing Theorem 2  in \S \ref{sec:2} and Theorem 3 in \S \ref{sec:3}.  These will then be used to prove Theorem 1 in \S \ref{sec:1}. \S \ref{sec:short} contains a short proof of the result of Hajaiej, Han \& Hua \cite{hhh} showing that no rearrangement on $(\mathbb{Z}^2, \ell^1)$ can satisfy the inequality for $p=2$ (this argument is independent of the others). \S \ref{sec:4} contains a proof of Theorem 4 which is then used to prove Corollary 1 in \ref{cor1}. Finally, \S \ref{cor2} gives a proof for Corollary 2. 

    \subsection{Proof of Theorem 2} \label{sec:2}
    \begin{proof}
    We assume without loss of generality that $\|f\|_{L^{\infty}} = 1$.
  By definition
  $$\| \nabla f\|_{L^1}= \sum_{(v,w) \in E} |f(v) - f(w)|.$$
  We define, for each $0 \leq s \leq 1$, the superlevel set 
  $$ \left\{f \geq s\right\} =  \left\{v \in V: f(v) \geq s \right\} \subseteq V.$$
 We will use the coarea formula, already using that without loss of generality $\|f\|_{L^{\infty}}=1$, in the form (see, for example, \cite[Lemma 1]{tillich})
    $$\| \nabla f\|_{L^1} = \int_0^{1}  \int_{\partial  \left\{f \geq s\right\} }  1 ~dx ds = \int_0^1 \# \partial  \left\{f \geq s\right\}  ds.$$
    This coarea formula can be seen as follows: suppose $(v,w) \in E$. This edge contributes $|f(v) - f(w)|^{}$
    to the left-hand side while contributing  $1$ over an interval of length $|f(v) - f(w)|$
    on the right-hand side. We note that, by definition,
 \begin{align*}
    \int_0^{1} \# \partial \left\{f \geq s\right\}    ds \geq   \int_0^{1}  \inf_{A \subset V \atop \# A = \#\left\{f \geq s\right\} } \# \partial_E (A)  ds.
  \end{align*}
By definition of the rearrangement, we have $ \#\left\{f \geq s\right\} =  \#\left\{f^* \geq s\right\}.$
As an assumption of Theorem 2,
$$ \# \partial_E (\left\{v_1, \dots, v_N\right\}) \leq \beta +   \alpha  \inf_{ A \subseteq  V \atop  \# A =N}~~ \#~ \partial_E (A),$$
from which we deduce, applying the coarea formula once more,
 \begin{align*}
 \|\nabla f\|_{L^1} \geq \int_0^{1}  \inf_{A \subset V \atop \# A = \#\left\{f \geq s\right\} } \# \partial_E (A)  ds &=   \int_0^{1}  \inf_{A \subset V \atop \# A = \#\left\{f^* \geq s\right\} } \# \partial_E (A)  ds \\
  &\geq  \frac{1}{\alpha} \int_0^{1}  \# \partial \left\{f^* \geq s\right\}  -\beta~ ds\\
  &=  - \frac{\beta}{\alpha} +  \frac{1}{\alpha}\| \nabla f^* \|_{L^1}.
    \end{align*}
This can rearranged as
$$\| \nabla f^* \|_{L^1} \leq \beta + \alpha \| \nabla f\|_{L^1}.$$
      which, recalling the normalization $\|f\|_{L^{\infty}} = 1$ now implies the result.
\end{proof}

\subsection{Proof of Theorem 3}\label{sec:3}
\begin{proof}
We assume without loss of generality that $\| \nabla f\|_{L^{\infty}} = 1$. Suppose that $v_1, v_2, \dots$ is a rearrangement with the desired properties. We want to show that for all $i \geq 1$, if the vertex $v_i$ has a neighbor $(v_i, v_j) \in E$, then the value of $f^*$ in $v_j$ is not much smaller than $f^*(v_i)$. Formally:
$$ \inf \left\{ f^*(v_j): j \geq i ~\mbox{and}~ (v_i, v_j) \in E \right\} \geq f^*(v_i) - c.$$
We note that the function $f$ has few large values in the sense that it assumes a value at least as large as $f^*(v_i)$ a fixed number of times: by definition, $f^*(v_i)$ is the $i-$th largest value and thus
$$ \# \left\{v \in V: f(v) \geq f^*(v_i) \right\} \geq i.$$
We note that the inequality need not be strict, it could be that the value $f^*(v_i)$ is attained many more times. 
We call this set $A =   \left\{v \in V: f(v) \geq f^*(v_i) \right\}$. By definition of the vertex-isoperimetric profile and the fact that it is non-decreasing (which is one of the assumptions), we have
$$ \# \left\{ v \in V: d(v, A) \leq 1 \right\} \geq i + \partial_V(\#A) \geq  i + \partial_V(i).$$
If $\partial_V$ is monotonically non-decreasing then, for any $k \geq 1$,
\begin{align*}
 \# \left\{ v \in V: d(v, A) \leq k+1 \right\} &\geq  \# \left\{ v \in V: d(v, A) \leq k \right\} \\
 &+ \partial_V\left(  \# \left\{ v \in V: d(v, A) \leq k \right\} \right) \\
 &\geq  \# \left\{ v \in V: d(v, A) \leq k \right\} + \partial_V(\#A) \\
 &\geq  \# \left\{ v \in V: d(v, A) \leq k \right\} + \partial_V(i)
\end{align*}
from which one obtains, by iteration,
$$ \# \left\{ v \in V: d(v, A) \leq k \right\} \geq i + k \cdot \partial_V(i).$$
We deduce that, for any arbitrary integer $k \geq 1$, there are at least $ i + k \cdot \partial_V(i)$
vertices $w$ satisfying $f(w) \geq f^*(v_i) - k$ (since $\| \nabla f\|_{L^{\infty}} = 1$, we know $f$ can only decrease by at most 1 each step)
 and thus the $(i + k \cdot \partial_V(i))-$th largest value assumed by $f$ is at least $f^*(v_i) - k$.
By assumption, there is $c>0$ such that $v_i$ is not connected to vertices with a much larger index: $i \leq j$ and $(v_i, v_j) \in E$, then we have
$ j \leq i + c \cdot \partial_V(i)$. 
Setting $k=c$ implies that there are at least $ i + c \cdot \partial_V(i)$ vertices on which the function is at least $f^*(v_i) - c$ and thus, since $f^*(v_j)$ is the $j-$th largest value assumed by the function and $ j \leq i + c \cdot \partial_V(i)$, we have
 $f^*(v_j) \geq f^*(v_i) - c$. We deduce
 $$ |f^*(v_i) - f^*(v_j)| \leq c = c \cdot \| \nabla f\|_{L^{\infty}}$$
 and since $i \leq j$ were arbitrary (subject to $(i,j) \in E$), we arrive at the result.
\end{proof}

    \subsection{Proof of Theorem 1}  \label{sec:1}
   
        The proof of Theorem 1 consists in showing that the spiral rearrangement leads to a reordering of the vertices of $(\mathbb{Z}^2, \ell^1)$ such that Theorem 2 is applicable with $\alpha=1$, $\beta = 0$ and Theorem 3 is applicable with $c=2$.\\
                
        \textbf{The $L^1-$inequality.} The first step amounts to showing that the first $n$ elements of the spiral rearrangement have as few neighbors as possible for a set of that size. These problems have been solved in much greater generality and in more difficult settings, see Bollob\'as \& Leader \cite{boll, boll2}. We include a self-contained elementary argument for this much simpler special case.        
        
  \begin{proof}
Let $A \subset \mathbb{Z}^2$ be an arbitrary set on $n$ elements. We can project the set onto the $x-$axis, i.e.
$ A_x = \left\{x \in \mathbb{Z}: \exists y \in \mathbb{Z} ~\mbox{such that}~ (x,y) \in A \right\}$ and likewise onto the
$y-$axis leading to $A_y$. We have $A \subseteq A_x \times A_y$ and thus $\# A_x \cdot \# A_y \geq n$. Each
element in $A_x$ identifies at least two unique edges between $A$ and $A^c$ (at the top and the bottom of that slice)
  and, likewise, each element in $A_y$ identifies at least two unique edges (from the left and right end of the slice). Thus
  \begin{align*}
   \# \partial_E(A) &\geq 2 \#A_x + 2 \# A_y \geq  4 \sqrt{\# A_x} \sqrt{\# A_y}  \geq 4 \sqrt{n}.
   \end{align*}
Equality in the second inequality can only occur if $\#A_x = \# A_y$ and equality in the last inequality can only happen if $\# A_x \# A_y = n$. Moreover, since all three numbers $\#A_x, \#A_y, n$ are integers, one can get a little extra information out of the inequality which turns out to be sufficient.  We first observe that the inequality immediately implies optimality of the spiral rearrangement whenever
   $n$ is a square number. Let us now fix $n$ to be a square number and consider the case of $n + k$ where $1 \leq k < 2\sqrt{n} + 1$. Assuming w.l.o.g. that $\# A_x = \sqrt{n} - \ell$ for some $\ell \geq 0$ implies $\#A_y \geq \sqrt{n} + \ell + 1$ since, if $\#A_y \leq \sqrt{n} + \ell$, then $\# A_x \# A_y < n$ which is a contradiction. Thus $\#A_y \geq \sqrt{n} + \ell + 1$ which implies that $ \# \partial_E(A) \geq 4 \sqrt{n} + 2$. The spiral construction has
   exactly $4\sqrt{n} + 2$ neighbors as long as $k \leq \sqrt{n}$ and is thus optimal in this range. It remains to analyze the case where $\sqrt{n} < k \leq 2\sqrt{n}$. In that case, assuming w.l.o.g. $\# A_x = \sqrt{n} - \ell$ for some $\ell \geq 0$, we see that  $\#A_y \geq \sqrt{n} + \ell + 2$ since, as above,
   if we had $\#A_y \leq \sqrt{n} + \ell + 1$, then $\# A_x \# A_y \leq n - \ell^2 + \sqrt{n} - \ell \leq n + \sqrt{n}$ which is another contradiction. $\#A_y \geq \sqrt{n} + \ell + 2$ implies $ \# \partial_E(A) \geq 4 \sqrt{n} + 4$ matching again the spiral construction.
    \end{proof}

\hspace{1pt}\\

\textbf{The $L^{\infty}-$inequality.}
\begin{proof}
Our goal is to show that Theorem 3 is applicable with constant $c=2$. This means that we want to establish that, for the spiral arrangement, 
$$ \partial_V \left( \left\{v_1, \dots, v_N \right\}\right) \subseteq \left\{v_1, \dots, v_{N + 2 \cdot \partial_V(N)} \right\}.$$
This requires us to analyze the size of $\partial_V(N)$ and to understand the neighborhood of the first $N$ elements in
the spiral embedding $\partial_V \left( \left\{v_1, \dots, v_N \right\}\right)$. The neighborhood question is easiest, we see
that the spiral embedding satisfies 
$$ \partial_V \left( \left\{v_1, \dots, v_N \right\}\right) \subseteq \left\{v_1, \dots, v_{M} \right\},$$
where, asymptotically to leading order, $M \sim N + 4\sqrt{N}$. 
The vertex-isoperimetric problem on $(\mathbb{Z}^2, \ell^1)$ has
been solved by Wang \& Wang \cite{wang} (they solve the problem in all dimensions), the asymptotically optimal shape is asymptotically an $\ell^1-$ball (in contrast to the edge-isoperimetric problem
where the optimal shape is an $\ell^{\infty}-$ball). 

\begin{figure}[h!]
\begin{minipage}[l]{.45\textwidth}
\begin{tikzpicture}
\node at (-3,0) {};
\node at (0,0) {1};
\node at (0,0.5) {2};
\node at (0.5,0) {3};
\node at (-0.5,0) {4};
\node at (0,-0.5) {5};
\node at (0.5,0.5) {6};
\node at (-0.5,0.5) {7};
\node at (0,1) {8};
\node at (1,0) {9};
\node at (0.5,-0.5) {10};
\node at (-1,0) {11};
\node at (-0.5,-0.5) {12};
\node at (0,-1) {13};
\end{tikzpicture}
\end{minipage}
\begin{minipage}[r]{.5\textwidth}
\begin{center}
\begin{tabular}{c  c  c  c c  c c}
$N$ & 1 & 2 & 3 & 4 & 5 & 6\\
$\# \partial_V(N)$ &  4  &  6 &  7 & 8 & 8 & 9\\
\end{tabular}
\end{center}
\begin{center}
\begin{tabular}{c  c  c  c c  c c}
$N$ & 1 & 2 & 3 & 4 & 5 & 6\\
$M$ &  8  &  11 &  14 & 15 & 18 & 19
\end{tabular}
\end{center}
\end{minipage} 
\caption{Left: a sequence of nested minimizers of the vertex-isoperimetry problem (Wang \& Wang \cite{wang}). Right: values of $\# \partial_V(n)$ for small $n$ as well as the smallest $m$ such that the neighbors of the first $n$ elements are contained
in the first $m$ elements.} 
\label{fig:smaln}
\end{figure}

The asymptotic $\ell^1-$ball with $N$ elements has $\partial_V(N) \sim 4\sqrt{N}$ neighbors.
This shows that things asymptotically match (even with asymptotic constant $c=1$).  It remains to analyze the case of small values. This is done in Fig. \ref{fig:smaln}: we see that $c=1 + \varepsilon$ is sufficient asymptotically and that $c=2$ is enough to ensure that the initial values satisfy $M \leq N + c \cdot \partial_V(N)$ and thus
$$ \| \nabla f^*\|_{L^{\infty}} \leq 2 \| \nabla f\|_{L^{\infty}}.$$
\end{proof}

  \begin{figure}[h!]
\begin{minipage}[l]{.45\textwidth}
\begin{tikzpicture}
\node at (-3,0) {};
\node at (0,0) {2};
\node at (0,0.5) {1};
\node at (0.5,0) {1};
\node at (-0.5,0) {1};
\node at (0,-0.5) {1};
\node at (0.5,0.5) {0};
\node at (-0.5,0.5) {0};
\node at (0.5,-0.5) {0};
\node at (-0.5,-0.5) {0};
\node at (-1, -1) {$f$};
\end{tikzpicture}
\end{minipage}
\begin{minipage}[r]{.5\textwidth}
\begin{tikzpicture}
\node at (-3,0) {};
\node at (0,0) {2};
\node at (0,0.5) {1};
\node at (0.5,0) {1};
\node at (-0.5,0) {0};
\node at (0,-0.5) {0};
\node at (0.5,0.5) {1};
\node at (-0.5,0.5) {1};
\node at (0.5,-0.5) {0};
\node at (-0.5,-0.5) {0};
\node at (-1, -1) {$f^*$};
\end{tikzpicture}
\end{minipage} 
\caption{An example showing $ \| \nabla f^*\|_{L^{\infty}} \leq 2 \| \nabla f\|_{L^{\infty}}$ is optimal for the spiral rearrangement.}
\end{figure}

  \subsection{Rearrangements on $(\mathbb{Z}^2, \ell^1)$ in $L^2$} \label{sec:short} We give a short proof that no rearrangement on the grid graph $(\mathbb{Z}^2, \ell^1)$ can satisfy $\| \nabla f^*\|_{L^2} \leq \| \nabla f\|_{L^2}$. This result was recently proven by Hajaiej, Han \& Hua \cite{hhh}, our proof is heavily inspired by theirs.
  
\begin{proposition}
Let $1 < p < \infty$. The lattice graph $(\mathbb{Z}^2, \ell^1)$ does not admit a rearrangement procedure such that $\| \nabla f^*\|_{L^p} \leq \| \nabla f\|_{L^p}$ for all functions $f$.
\end{proposition}
  
  \begin{proof} We first assume that there exists such a rearrangement procedure. By looking at a very particular function, we deduce where the first 5 terms have to be placed. We then show that a different function has increasing gradients under this procedure.
  Consider first a function assuming the non-zero values $(n,1,1,1,1)$ with $n \gg 1$ (and assuming value $0$ everywhere else). One natural way the values could be arranged is to have $n$ in the center be surrounded by 4 times the value $1$. This arrangement leads to a function $f_1$ with
$$ \| \nabla f_1\|_{L^p}^p =  4(n-1)^p + 12.$$
  Suppose we have any other arrangement $f_2$: then the largest value $n$ is surrounded by at most 3 times the value 1 and thus
  $$ \| \nabla f_2\|_{L^p}^p \geq n^p + 3(n-1)^p =  4(n-1)^p + (n^p - (n-1)^p).$$
We see that for $p>1$ and $n$ sufficiently large, we have $ \| \nabla f_1\|_{L^p}^p < \| \nabla f_2\|_{L^p}^p$ which shows that the optimal arrangement, if it exists, has to be the one shown in Fig. \ref{fig:shortproof}. On the other hand, if we consider a function assuming the values $(1,1,1,1,1)$ (and $0$ everywhere else), then it is easily seen that the same central rearrangement leads to an energy of $\|\nabla f_1\|_{L^p}^p = 12$ while a suitable asymmetric `block-'rearrangement leads to $\|\nabla f_2\|_{L^p}^p = 10$.
       \end{proof}
  \begin{center}
  \begin{figure}[h!]
  \begin{tikzpicture}[scale=0.5]
  \node at (0,0) {$n$};
    \node at (1,0) {$1$};
    \node at (1,1) {0};
  \node at (0,1) {1};
  \node at (-1,1) {0};
  \node at (-1,0) {1};
  \node at (-1,-1) {0};
  \node at (0,-1) {1};
  \node at (1,-1) {0};
  \end{tikzpicture}
    \begin{tikzpicture}[scale=0.9]
  \node at (0,0) {};
    \node at (1,0) {};
      \end{tikzpicture}
    \begin{tikzpicture}[scale=0.5]
  \node at (0,0) {$1$};
    \node at (1,0) {$1$};
    \node at (1,1) {1};
  \node at (0,1) {1};
  \node at (-1,1) {0};
  \node at (-1,0) {0};
  \node at (-1,-1) {0};
  \node at (0,-1) {1};
  \node at (1,-1) {0};
  \end{tikzpicture}
  \caption{Two competing configurations (invisible lattice points have value 0): the configuration on the left is an optimal rearrangement on the grid graph $\mathbb{Z}^2$ for $n$ sufficiently large. However, the configuration on the right does better when $n=1$ which shows that no universal rearrangement satisfying $\| \nabla f^*\|_{L^2} \leq \| \nabla f\|_{L^2}$ exists.}
    \label{fig:shortproof}
  \end{figure}
  \end{center}

We quickly note that the proof can be quantified to yield the following.
\begin{proposition} Given an arbitrary rearrangement on $(\mathbb{Z}^2, \ell^1)$, there always exists a function $f:\mathbb{Z}^2 \rightarrow \mathbb{R}_{\geq 0}$ such that
$$ \| \nabla f^*\|_{L^2} \geq 1.01 \cdot \| \nabla f\|_{L^2} $$
\end{proposition}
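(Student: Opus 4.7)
My plan is to quantify the two-case argument from the qualitative proof above. Given an arbitrary rearrangement $v_1, v_2, \ldots$ of $\mathbb{Z}^2$, I would distinguish whether $\{v_1, v_2, v_3, v_4, v_5\}$ forms the five-vertex ``cross'', i.e., $v_1$ together with its four grid-neighbors labeled (in some order) as $v_2, v_3, v_4, v_5$.

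In the cross case, take $f$ to be the indicator function of an edge-isoperimetric optimizer on five vertices, for example a $2\times 2$ block with one additional adjacent vertex; then $\|\nabla f\|_{L^2}^2 = 10$, while $f^*$ is the indicator of the cross and $\|\nabla f^*\|_{L^2}^2 = 12$. The ratio $\sqrt{6/5} > 1.09$ is comfortably above the claimed $1.01$.

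In the non-cross case, take $f$ to assume the values $n, 1, 1, 1, 1$ on some cross in $\mathbb{Z}^2$ (with the value $n$ at the center) and $0$ elsewhere, for an integer $n$ to be optimized. Direct counting gives $\|\nabla f\|_{L^2}^2 = 4(n-1)^2 + 12 = 4n^2 - 8n + 16$. For $f^*$, let $d_1$ denote the number of vertices among $v_2, \ldots, v_5$ adjacent to $v_1$ and $E_{22}$ the number of edges inside $\{v_2, \ldots, v_5\}$; since $\mathbb{Z}^2$ is $4$-regular, sorting edges by type gives
$$\|\nabla f^*\|_{L^2}^2 = 4n^2 - 2 d_1 n + 16 - 2 E_{22},$$
and the non-cross hypothesis is exactly $d_1 \leq 3$.

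The key combinatorial input is the bound $E_{22} \leq 2$ whenever $d_1 = 3$: any three grid-neighbors of $v_1$ are pairwise non-adjacent in $\mathbb{Z}^2$, and any such triple contains one of the two ``opposite'' pairs of neighbors of $v_1$, whose only common grid-neighbor is $v_1$ itself; this forces $v_5$ to be adjacent to at most two of the three chosen vertices. A short inspection of $d_1 \leq 2$ shows those cases give strictly larger values of $\|\nabla f^*\|_{L^2}^2$ once $n \geq 2$, so in the non-cross case $\|\nabla f^*\|_{L^2}^2 \geq 4n^2 - 6n + 12$. Optimizing the ratio $(4n^2 - 6n + 12)/(4n^2 - 8n + 16)$ over integer $n$ selects $n = 4$, producing $\sqrt{13/12} > 1.04 > 1.01$. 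Combining the two cases completes the proof; the only step with any real content is the combinatorial inequality $E_{22} \leq 2$ when $d_1 = 3$, and everything else is bookkeeping.
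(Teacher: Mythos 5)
Your proof is correct and follows essentially the same strategy as the paper: the same dichotomy (whether or not $\{v_1,\dots,v_5\}$ is the cross, i.e.\ $v_1$ plus its four neighbors) and the same two test functions, namely the indicator of the $2\times 2$ block plus one adjacent vertex and the configuration $(n,1,1,1,1)$. Your bookkeeping in the non-cross case is actually sharper than the paper's: the exact formula $\|\nabla f^*\|_{L^2}^2 = 4n^2 - 2d_1 n + 16 - 2E_{22}$ together with the observation that $E_{22}\leq 2$ when $d_1=3$ yields $\|\nabla f^*\|_{L^2}^2\geq 4n^2-6n+12$, improving on the paper's bound $n^2+3(n-1)^2+5=4n^2-6n+8$ and hence on the resulting constant (your $\sqrt{13/12}>1.04$ at $n=4$ versus roughly $\sqrt{27/26}\approx 1.019$ from the paper's estimate), both of which comfortably exceed $1.01$.
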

\begin{proof} We consider again a function assuming values $(n,1,1,1,1)$ and the value 0 everywhere else. If the rearrangement happens to be the one that places the largest value in the center and the four next values around it, then we choose $n=1$ and compare to the right configuration in
 Fig. \ref{fig:shortproof} and obtain
 $$ \| \nabla f^*\|_{L^2}^2 = 12 = \frac{6}{5} \cdot 10 = \frac{6}{5} \cdot \| \nabla f\|_{L^2}^2.$$
 If the rearrangement is of any other type, then we assume $f$ to be as in the left configuration of  Fig. \ref{fig:shortproof}. The largest value in the rearrangement is surrounded by at least one square containing the, at most sixth largest value, which happens to be 0 in our construction. We obtain 
 $$    \| \nabla f^*\|_{L^2}^2 \geq n^2 + 3(n-1)^2 + 5.$$
Comparing to the original function with $ \| \nabla f\|_{L^2}^2 = 4 n^2 - 8n + 16$ and optimizing in $n$ leads to the result.
\end{proof}
It would be nice if the constant in the estimate could be improved: it can be said to measure the impossibility of a P\'olya-Szeg\H{o} inequality, see also \cite{shubham}.

\subsection{Proof of Theorem 4} \label{sec:4}
The standard coarea formula says that
$$ \| \nabla f\|_{L^p}^p =  \int_{0}^1 \int_{ \partial_{E} \left\{ f \geq s \right\} } \left| \nabla f\right|^{p-1} dx ds$$
and was already used in the proof of Theorem 2 for $p=1$: the idea being each edge contributes $|f(v) - f(w)|^p$ to the
left-hand side and $|f(v) - f(w)^{p-1}$ over an interval of length $|f(v) - f(w)|$. We will now use a small modification
of the idea: the advantage of this new formulation is that the values in $\left\{ f \geq s \right\}$ no longer show
up in the inner integral which is solely determined by $s$ and the values outside.

\begin{lemma}[Modified Coarea Formula] Suppose $\|f\|_{L^{\infty}} = 1$ and $1 \leq p < \infty$. Then
$$ \| \nabla f\|_{L^p}^p =  p\int_{0}^1 \int_{ \partial_{E} \left\{ f \geq s \right\} } \left| \nabla \min(f, s) \right|^{p-1} dx ds.$$
\end{lemma}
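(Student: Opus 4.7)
The plan is to verify the identity edge by edge, using Fubini to interchange the summation over edges with the integral in $s$. Write the right-hand side as
\begin{equation*}
p \int_0^1 \sum_{(v,w) \in \partial_E\{f \geq s\}} |\nabla \min(f,s)(v,w)|^{p-1}\, ds,
\end{equation*}
and swap the order: for each edge $(v,w) \in E$ determine the set of $s \in [0,1]$ for which $(v,w)$ belongs to the boundary $\partial_E\{f \geq s\}$, and evaluate the inner integrand on that edge.

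Fix an edge $(v,w)$ and assume without loss of generality that $f(v) \geq f(w)$. The edge belongs to $\partial_E\{f \geq s\}$ precisely when $f(v) \geq s > f(w)$, i.e.\ for $s$ in the interval $(f(w), f(v)]$. For such $s$, we have $\min(f(v), s) = s$ and $\min(f(w), s) = f(w)$, so
\begin{equation*}
|\nabla \min(f, s)(v,w)| = |s - f(w)| = s - f(w).
\end{equation*}
The contribution of this edge to the right-hand side is therefore
\begin{equation*}
p \int_{f(w)}^{f(v)} (s - f(w))^{p-1}\, ds = (f(v) - f(w))^p.
\end{equation*}
Summing over edges gives exactly $\|\nabla f\|_{L^p}^p$.

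The only things to check carefully are the convention for whether $(v,w) \in \partial_E\{f\geq s\}$ uses strict versus non-strict inequality (this affects only a measure-zero set of $s$ and is harmless), and the assumption $\|f\|_{L^\infty}=1$ which is used only to limit the range of $s$ to $[0,1]$. There is no real obstacle: the formula is a direct consequence of the fundamental theorem of calculus applied on each edge, and the usefulness noted by the author lies in the fact that $|\nabla \min(f,s)|$ on an edge $(v,w) \in \partial_E\{f\geq s\}$ depends only on $s$ and on the value of $f$ at the endpoint lying outside the superlevel set, which is what makes the identity well-suited to rearrangement arguments in the subsequent proof of Theorem 4.
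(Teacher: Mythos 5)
Your proof is correct and is essentially identical to the paper's: both arguments fix a single edge $(v,w)$, observe that it contributes to the inner integral exactly for $s$ between $\min\{f(v),f(w)\}$ and $\max\{f(v),f(w)\}$ with integrand $\left(s - \min\{f(v),f(w)\}\right)^{p-1}$, and evaluate $p\int (s-\min\{f(v),f(w)\})^{p-1}\,ds = |f(v)-f(w)|^p$. Your write-up just makes the Fubini step and the boundary-membership condition slightly more explicit.
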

\begin{proof} We consider again a single edge $(v,w) \in E$. The contribution to the left-hand side is
$|f(v) - f(w)|^p$. 
The contribution to the right-hand side is
$$ \int_{\min\left\{f(v), f(w) \right\}}^{\max\left\{f(v), f(w) \right\}} \left(s - \min\left\{f(v), f(w) \right\}\right)^{p-1} ds = \frac{|f(v) - f(w)|^p}{p}.$$
\end{proof}

\begin{proof}[Proof of Theorem 4] Let now $f:V \rightarrow \mathbb{R}_{\geq 0}$ be normalized to $\|f\|_{L^{\infty}} = 1$ but otherwise arbitrary. The modified coarea formula is
$$ \| \nabla f\|_{L^p}^p =  p\int_{0}^1 \int_{ \partial_{E} \left\{ f \geq s \right\} } \left| \nabla \min(f, s) \right|^{p-1} dx ds.$$
Let us now fix an arbitrary value $0 < s  <  1$ and analyze the inner integral
$$J = \int_{ \partial_{E} \left\{ f \geq s \right\} } \left| \nabla \min(f, s) \right|^{p-1} dx.$$
We may think of the function values of $f$ as sorted
$$ f^*(v_1) \geq f^*(v_2) \geq \dots \geq f^*(v_j) \geq s > f^*(v_{j+1}) \geq f^*(v_{j+2}) \geq \dots$$
The integral $J$ runs at least over
$ \# \partial_{E} \left\{ f \geq s \right\} \geq \# \partial_{V} \left\{ f \geq s \right\}$
different edges.  Abbreviating $k = \# \partial_{V} \left\{ f \geq s \right\}$, we deduce
$$  \int_{ \partial_{E} \left\{ f \geq s \right\} } \left| \nabla \min(f, s) \right|^{p-1} dx \geq \sum_{i=1}^{k} (s - f^*(v_{j+i}))^{p-1}.$$
At the same time, since by assumption
$$ \#\partial_{V} \left\{ f \geq s \right\} \geq \# \partial_{V} \left\{ f^* \geq s \right\} =  \# \partial_{E} \left\{ f^* \geq s \right\}$$
as well as $  \partial_E \left( \left\{v_1, \dots, v_N\right\} \right) \subseteq \left\{v_1, \dots, v_{N+ \partial_V(N)} \right\}$,
we deduce
$$  \sum_{i=1}^{k} (s - f^*(v_{j+i}))^{p-1} =  \int_{ \partial_{E} \left\{ f^* \geq s \right\} } \left| \nabla \min(f^*, s) \right|^{p-1} dx $$
from which it follows that
\begin{align*}
  \| \nabla f\|_{L^p}^p  &= p\int_{0}^1 \int_{ \partial_{E} \left\{ f \geq s \right\} } \left| \nabla \min(f, s) \right|^{p-1} dx ~ds \\
  &\geq p\int_{0}^1 \int_{ \partial_{E} \left\{ f^* \geq s \right\} } \left| \nabla \min(f^*, s) \right|^{p-1} dx ~ds = \| \nabla f^*\|_{L^p}^p.
 \end{align*}
\end{proof}

  \subsection{Proof of Corollary 1} \label{cor1}
  We start with a Lemma which probably exists somewhere in the literature. However, we were unable to locate the statement for the vertex expansion and therefore add a quick argument.
  \begin{lemma} Let $G$ be the infinite $d-$regular tree. Then, for any subset $A \subset V$ of vertices, the edge boundary satisfies
      $$\# \partial_E(A) \geq (d-2) \# A  +2$$
and $A$ is adjacent to at least
        $$\# \partial_V(A) \geq (d-2) \# A  +2$$
vertices and these bounds are best possible.
  \end{lemma}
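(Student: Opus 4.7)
The plan is to handle the edge bound first via a degree-sum argument, and then to reduce the vertex bound to the edge bound — first in the connected case, then, through a contraction argument, in the general case. The key structural fact I will exploit throughout is that every subgraph of a tree is a forest, so an $m$-vertex subgraph with $k$ components has at most $m-k$ edges.

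For the edge boundary, summing degrees over $A$ gives $d \cdot \# A = 2 e(A) + \# \partial_E(A)$, where $e(A)$ denotes the number of edges inside $A$. Since the induced subgraph on $A$ is a forest with $c(A)$ components, $e(A) \leq \# A - c(A)$, so
$$ \# \partial_E(A) \geq (d-2) \cdot \# A + 2 c(A) \geq (d-2) \cdot \# A + 2. $$

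The connected case of the vertex bound is then immediate: if $A$ is connected and some $v \in V \setminus A$ had two neighbors $u_1, u_2 \in A$, then the edges $u_1 v$, $v u_2$ together with a path in $A$ from $u_1$ to $u_2$ would form a cycle, contradicting the tree assumption. Hence distinct boundary edges end at distinct external vertices, and $\# \partial_V(A) = \# \partial_E(A) \geq (d-2) \cdot \# A + 2$. The main obstacle is the general (disconnected) case, where components of $A$ may share external neighbors, so naively summing the connected estimates overcounts. To control this I will contract each component $A_i$ of $A$ to a single super-vertex $a_i$; the resulting quotient graph is still a tree (contracting a connected subset of a tree preserves the tree property), and the neighbors of $a_i$ in the quotient are precisely the vertices of $\partial_V(A_i)$. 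Applying the forest inequality to the bipartite subgraph on $\{a_1, \dots, a_c\} \cup \partial_V(A)$ with edges only between the $a_i$'s and $\partial_V(A)$ then yields $\sum_i \# \partial_V(A_i) \leq c + \# \partial_V(A) - 1$. Combining this with the connected-case bound applied to each $A_i$ gives
$$ \# \partial_V(A) \geq \sum_{i=1}^c \# \partial_V(A_i) - c + 1 \geq (d-2) \cdot \# A + c + 1 \geq (d-2) \cdot \# A + 2. $$

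For optimality, any finite subtree of the $d$-regular tree on $n$ vertices attains equality in both inequalities: the degree-sum identity gives $\# \partial_E = (d-2)n + 2$, and the connected-case reasoning shows $\# \partial_V = \# \partial_E$.
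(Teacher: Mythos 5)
Your proof is correct, and it takes a genuinely different route from the paper's. The paper treats the edge bound as known and concentrates on the vertex bound, which it proves by induction on $\# A$: it shows that any set minimizing the vertex neighborhood must be connected (remove a vertex of $A$ farthest from a chosen root, apply the induction hypothesis, and count the $d-1$ fresh neighbors recovered when the vertex is restored), and then uses $\# \partial_V(A) = \# \partial_E(A)$ for connected $A$. You instead give a direct, non-inductive count: the degree-sum identity $d\,\# A = 2e(A) + \#\partial_E(A)$ combined with the forest bound $e(A) \leq \# A - c(A)$ handles the edge boundary for \emph{all} sets at once (so you also supply the proof the paper omits), and your contraction of components plus a second application of the forest bound to the bipartite quotient handles the disconnected case of the vertex bound. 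Your argument actually yields the sharper estimates $\#\partial_E(A) \geq (d-2)\#A + 2c(A)$ and $\#\partial_V(A) \geq (d-2)\#A + c(A) + 1$ in terms of the number of components $c(A)$, which recovers, as a corollary rather than as an inductive lemma, the paper's observation that equality forces connectedness. The paper's induction is shorter to state but logically more delicate (the inductive hypothesis mixes the numerical bound with the equality characterization); your version is longer but entirely mechanical and self-contained. One small point worth making explicit if you write this up: a vertex in $\partial_V(A_i)$ cannot lie in another component $A_j$ (otherwise $A_i$ and $A_j$ would not be distinct components of the induced subgraph), which is what guarantees $\partial_V(A_i) \subseteq \partial_V(A)$ and that the quotient edges you count really land in $\partial_V(A)$.
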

  \begin{proof}  It is clear that if $A$ is connected, then the tree-structure implies that each outgoing edge goes to a unique vertex and thus, in that case, $\# \partial_E(A) = \# \partial_V(A)$. It remains to show that sets minimizing the vertex-neighborhood are connected. We prove, using induction on $n$, that any set of $n$ vertices minimizing the number of adjacent vertices has to connected. The statement is vacuous for $n=1$. For $n=2$, note that two connected vertices have $2(d-1)$ neighbors while two vertices that are not connected have at least $2d -1$ neighbors (with equality if and only if they are distance 2). Let now $n$ be arbitrary, pick an arbitrary vertex $v_0 \in A$ to be the root of the tree (for navigational purposes) and pick $v_1 \in A$ to be a vertex that has the largest distance from $v_0$. The set $A\setminus \left\{v_1\right\}$ has, by induction assumption, at least $(d-2)(\#A -1) + 2$ neighbors with equality only if $A$ is connected: moreover, none of the $d-1$ neighbors of $v_1$ that are distance $d(v_0, v_1) + 1$ from $v_0$ can be neighbors of $A\setminus v_1$. Adding the point $v_1$ back, we recover at least those $d-1$ neighbors while removing one neighbor from the set of neighbors ($v_1$ itself) if and only if $A$ is connected.
  \end{proof}
        \begin{proof}[Proof of Corollary 1]
        The Lemma implies that
        $$ \partial_V(N) = (d-2)N + 2 $$
        while the canonical reordering can be seen to satisfy
        $$   \# \partial_E \left( \left\{v_1, \dots, v_N\right\} \right) = (d-2)N+2$$
        as follows: clearly, for the root, we have $   \# \partial_E \left( \left\{v_1, \dots, v_N\right\} \right) = d$. Then, whenever adding a new point, we add $(d-1)$ new boundary vertices while removing one of the existing boundary vertices (exactly the new point that has been added).
        Hence the first condition of Theorem 4. is satisfied. It suffices to prove 
        $$  \partial_E \left( \left\{v_1, \dots, v_N\right\} \right) \subseteq \left\{v_1, \dots, v_{N+ (d-2)N + 2} \right\}.$$

  Fixing an arbitrary vertex $v_1$ and denoting it to be the root, we see that there are $d \cdot (d-1)^{k-1}$ vertices at distance $k$. We distinguish two cases: the first case is that $2 \leq N \leq d+1$. An explicit computation shows that
    $$ \partial_V \left\{v_1, \dots, v_N\right\} = \left\{v_1, \dots, v_{d+1 + (N-1)(d-1)} \right\}$$
    and, as required by Theorem 4
    $ d+1 + (N-1)(d-1) \leq N + (d-2)N + 2.$
    Let us now assume that $N \geq d+2$. Let us introduce $k= d(v_0, v_N)$ as the distance to the root $v_0$. Since $N \geq d+2$, we have $k \geq 2$ and thus we can narrow down the possible value of $N$ in terms of $N_{k-1}$, the number of points at distance at most $k-1$, and $N_k$, the number of points at distance at most $k$ via
    $$ N_{k-1} = 1 + d +d \sum_{j=0}^{k-2} (d-1)^j < N \leq 1 + d +d \sum_{j=0}^{k-1} (d-1)^j = N_k.$$
    The neighbors of $\left\{v_1, \dots, v_N\right\}$ are then $\left\{v_1, \dots, v_M\right\}$ where
 $M= N_k + (N-N_{k-1})(d-1).$
We would like to have $M \leq N + (d-2)N + 2 = (d-1)N + 2$. This inequality is equivalent to $N_k - (d-1) N_{k-1} \leq 2$ which is easily seen to be true (in fact, equality holds). Theorem 4. applies and Corollary 1 follows. \end{proof}

  \subsection{Proof of Corollary 2} \label{cor2}
        \begin{proof}
We first consider the edge-isoperimetric problem. If $\#A = 1$ and we are dealing with a single vertex, then $\# \partial_E A \geq 2$ with equality if and only if it is on the left boundary (vertex 1 or vertex 2 in Fig. \ref{fig:spiral2}). It now suffices to consider $\# A \geq 2$.
If $A$ is fully contained in the upper or the lower row, one can easily see that $\# \partial_E(A) \geq \# A + 1$ is quite large. It suffices to deal with the case where $A$ has both elements in the upper and lower row.
 It is easy to see (by staying in each row and going to infinity) that for any set $A$ with an even number of elements, the best possible bound is $\# \partial_E(A) \geq 2$. If $\#A$ is odd, then we can consider the number of elements in the lower row and the number of elements in the upper row and notice that one of them has to be even and the other one has to be odd leading to at least one edge between rows and two edges when going to $\infty$, thus $\# \partial_E(A) \geq 3$. Altogether, we have
 
           $$ \# \partial_E(A) \geq \begin{cases} 2 \qquad &\mbox{if}~\# A = 1 \\
          3 \qquad &\mbox{if}~\# A \geq 2 ~\mbox{and odd} \\
           2 \qquad &\mbox{if}~\# A ~\mbox{even} \\
           \end{cases}$$
 Both the snake and the lexicographic rearrangement satisfies exactly the same bounds and thus Theorem 2 is applicable and we deduce, for both rearrangements,
 $$ \| \nabla f^*\|_{L^{1}} \leq  \| \nabla f\|_{L^{1}}.$$
 As for vertex expansion, it is easy to see that $\# \partial_V(A) \geq 2$ and that for the proposed rearrangement, this is sharp at each step. Moreover, for the snake rearrangement we see that
 $$ \partial_V(\left\{1,2,\dots, N\right\}) \subseteq \left\{1,2,\dots, N+3 \right\}$$
 and since $3 \leq 2 \cdot 2$ we have, with Theorem 3,
  $ \| \nabla f^*\|_{L^{\infty}} \leq  2 \cdot \| \nabla f\|_{L^{\infty}}.$
 It is easy to see that this is optimal, see Fig. \ref{fig:exam1}.   
    
    \begin{center}
  \begin{figure}[h!]    \label{fig:exam1}
  \begin{tikzpicture}[scale=1]
  \node at (5,0) {2};
    \node at (5,1) {1};
    \node at (6,1) {0};
  \node at (6,0) {1};
    \node at (7,0) {0};
      \node at (7,1) {0};
  \draw [] (5, 0.3) -- (5, 0.7);
    \draw [] (5.3, 1) -- (5.7, 1);
        \draw [] (6, 0.7) -- (6, 0.3);
           \draw [] (6.3, 0) -- (6.7, 0);     
            \draw [] (7, 0.3) -- (7, 0.7);   
                 \draw [] (7.3, 1) -- (7.7, 1);  
            \draw [] (7.3, 0) -- (7.7, 0);          
   \draw [] (5.3, 0) -- (5.7, 0);
     \draw [] (6.3, 1) -- (6.7, 1);                       
  \node at (10,0) {2};
    \node at (10,1) {1};
    \node at (11,1) {1};
  \node at (11,0) {0};
    \node at (12,0) {0};
      \node at (12,1) {0};
  \draw [] (10, 0.3) -- (10, 0.7);
    \draw [] (10.3, 1) -- (10.7, 1);
        \draw [] (11, 0.7) -- (11, 0.3);
           \draw [] (11.3, 0) -- (11.7, 0);     
            \draw [] (12, 0.3) -- (12, 0.7);   
                 \draw [] (12.3, 1) -- (12.7, 1);  
            \draw [] (12.3, 0) -- (12.7, 0);          
   \draw [] (10.3, 0) -- (10.7, 0);
     \draw [] (11.3, 1) -- (11.7, 1);       
    \end{tikzpicture}
    \caption{ $f$ (left) and its snake rearrangement (right). We have $\| \nabla f^*\|_{L^1}  = 5 = \| \nabla f\|_{L^1} $ and $\| \nabla f^*\|_{L^{\infty}} = 2 = 2 \cdot \| \nabla f\|_{L^{\infty}}$.}
  \end{figure}
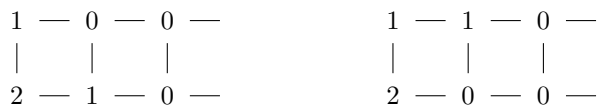
  \end{center}
    \vspace{-10pt}
  
The lexicographic rearrangement additionally satisfies
 $$ \partial_V(\left\{1,2,\dots, N\right\}) \subseteq \left\{1,2,\dots, N+2 \right\}$$
 which implies, with Theorem 3,
  $ \| \nabla f^*\|_{L^{\infty}} \leq   \| \nabla f\|_{L^{\infty}}$.
    \end{proof}
    
    We conclude by remarking that since both the snake rearrangement and the lexicographic rearrangement
  satisfy $ \| \nabla f^*\|_{L^{1}} \leq  \| \nabla f\|_{L^{1}}$ one can apply them both consecutively without changing the $L^1-$norm
  of the derivative even though the function is actually rearranged differently each time. This hints at
  an underlying symmetry and might be useful in practice (see the `technique of competing symmetries' in \cite{lieb}).
It stands to reason that examples of graphs that admit multiple different rearrangements satisfying the P\'olya-Szeg\H{o} inequality are probably rare.\\
  
\textbf{Acknowledgment.} I am grateful to a very diligent anonymous referee whose suggestions greatly improved the manuscript.

\end{document}